\newcommand{\Pb}{\mathbb{P}}
\newcommand{\dx}{\mathrm{d}}
\newcommand{\R}{\mathbb{R}}
\newcommand{\Z}{\mathbb{Z}}
\newcommand{\GUE}{\mathrm{GUE}}
\newtheorem{tthm}{Theorem}
\newtheorem{prop}{Proposition}[section]
\newtheorem{cor}{Corollary}
\newtheorem{rem}[prop]{Remark}
\newenvironment{remark}{\begin{rem}\normalfont}{\end{rem}}
\theoremstyle{definition}
\newcommand{\blocktheorem}[1]{%
  \csletcs{old#1}{#1}
  \csletcs{endold#1}{end#1}
  \RenewDocumentEnvironment{#1}{o}
    {\par\addvspace{1.5ex}
     \noindent\begin{minipage}{\textwidth}
     \IfNoValueTF{##1}
       {\csuse{old#1}}
       {\csuse{old#1}[##1]}}
    {\csuse{endold#1}
     \end{minipage}
     \par\addvspace{1.5ex}}
}
\title{Cutoff profile of ASEP on a segment}
\author{Alexey Bufetov}
\address[Alexey Bufetov]{University of Leipzig, Germany. E-mail: alexey.bufetov@gmail.com}
\author{Peter Nejjar}
\address[Peter Nejjar]{Hausdorff Center for Mathematics \& Institute for Applied Mathematics, University of Bonn, Germany. E-mail: nejjar@iam.uni-bonn.de}
\begin{document}

\date{}

\begin{abstract}

This paper studies the  mixing behavior of the Asymmetric Simple Exclusion Process (ASEP) on a segment of length $N$. Our main result is that for particle densities in $(0,1),$ the total-variation cutoff window of ASEP is $N^{1/3}$ and the cutoff profile is $1-F_{\GUE},$ where $F_{\GUE}$ is the  Tracy-Widom distribution function.   This 
also gives a new proof of the cutoff itself, shown earlier
by Labb\'{e} and Lacoin. Our proof combines coupling arguments, the result of Tracy-Widom about fluctuations of ASEP started from the step initial condition, and exact algebraic identities coming from interpreting  the multi-species ASEP as a random walk on a Hecke algebra.
\end{abstract}

\maketitle

\section{Introduction}

We consider ASEP on the segment $[1;N]:=\{1,\ldots,N\}$ with $k\leq N$ particles. This is a continuous time Markov chain with  state space
\begin{equation*}
\Omega^{N,k} :=\left\{\xi\in \{0,1\}^{N}:\sum_{i=1}^{N}\xi(i)=k\right\}.
\end{equation*}
We think of the $1's$ as particles, and of the $0's$ as holes. The dynamics of ASEP can be described as follows: Each particle waits an exponential time with parameter $1$, after which with probability $p>1/2$ it attempts to make a unit step to the right, and with probability $q=1-p<1/2$ it attempts to make a unit step to the left. The attempt is succesfull if the target site lies in $[1;N]$ and is occupied by a hole, the hole and the particle exchanging their positions when the particle moves a unit step.  If the attempt is not successful, nothing happens.

For $\xi\in  \Omega^{N,k},$ we denote by $\xi_{t}$ the state at time $t$ of the ASEP started from $\xi,$ and we denote by $P_{t}^{\xi}$ the law of  $\xi_{t}$.
The ASEP dynamics on $[1;N]$ with $k$ particles has a unique stationary measure which we denote by $\pi_{N,k}$. 

Recall that the total-variation distance of two probability measures $\mu,\mu^{\prime}$ on $\Omega^{N,k}$ is given by
\begin{equation*}
||\mu-\mu^{\prime}||_{\mathrm{TV}} :=\max_{A \subset \Omega^{N,k}}|\mu(A)-\mu^{\prime}(A)|.
\end{equation*}
We define the maximal total-variation distance between the distribution at given time and the stationary distribution as
\begin{equation*}
d^{N,k}(t) :=\max_{\xi \in \Omega^{N,k}}||P_{t}^{\xi}-\pi_{N,k}||_{\mathrm{TV}},
\end{equation*}
and for $c\in \R, $ we define the time point
\begin{equation}
g(k,c):=\frac{(\sqrt{k}+\sqrt{N-k})^{2}+cN^{1/3}}{p-q}.
\end{equation}

The main result of this paper is the following:
\begin{tthm}\label{main} Assume that $k=k_{N}$ satisfies $\lim_{N\to\infty}k_{N}/N = \alpha$, and $\alpha \in (0,1)$. For any $c\in \R$ we have
\begin{equation*}
\lim_{N\to \infty}d^{N, k_{N}}\left(g(k_{N},c)\right) =1-F_{\GUE}(cf(\alpha)),
\end{equation*}
where $f(\alpha)=\frac{(\alpha(1-\alpha))^{1/6}}{(\sqrt{\alpha}+\sqrt{1-\alpha})^{4/3}}$, and $F_\GUE$ is the $\GUE$ Tracy-Widom distribution  defined in \eqref{FGUE2}.
\end{tthm}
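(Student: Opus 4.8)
The plan is to combine the three tools advertised in the abstract: the attractiveness of ASEP under the basic coupling, the exact identities coming from the multi-species / Hecke-algebra picture, and Tracy-Widom's theorem for the step initial condition. I would organize the argument as a reduction to one worst-case initial state, then to a single height-function observable, and finally an asymptotic analysis. For the first reduction, recall that ASEP is attractive: under the grand coupling driven by common Poisson clocks the partial order $\xi\preceq\eta$ given by $\sum_{i\le x}\xi(i)\ge\sum_{i\le x}\eta(i)$ for all $x$ is preserved. The maximal element is the packed-left state $\xi^{\ast}=1^{k}0^{N-k}$ and the minimal element is the packed-right state; since $p>q$, the stationary measure $\pi_{N,k}$ is the reversible blocking measure concentrated near the packed-right state. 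Hence $\xi^{\ast}$ is the configuration that is hardest to equilibrate, and I would first prove $d^{N,k}(t)=\|P_{t}^{\xi^{\ast}}-\pi_{N,k}\|_{\mathrm{TV}}+o(1)$ by squeezing the chain from an arbitrary $\xi$ between the chains started from $\xi^{\ast}$ and from $\pi_{N,k}$.

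Running the chain from $\xi^{\ast}$ together with a stationary chain under the basic coupling, monotonicity keeps the two height functions ordered, so they coincide precisely when a single extremal discrepancy has been absorbed. This sandwiches $\|P_{t}^{\xi^{\ast}}-\pi_{N,k}\|_{\mathrm{TV}}$ between the probabilities that the height function $h_{t}^{\xi^{\ast}}(x)=\sum_{i\le x}\xi_{t}^{\ast}(i)$, evaluated at an appropriate location, lies on one or the other side of its stationary value. The total-variation distance therefore becomes asymptotically the probability that the integrated current of the ASEP started from the packed-left (step) state has not yet reached its stationary level at time $t$, a single one-dimensional event.

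The current observable on the segment is not directly covered by Tracy-Widom's theorem, which concerns the step initial condition on $\Z$. Here I would use the exact algebraic identities obtained by realizing the multi-species ASEP as a random walk on the Hecke algebra to express the relevant crossing probability in terms of a single-species ASEP started from step on the full line, the boundary contributions being negligible on the $N^{1/3}$ scale because at the cutoff time the relevant fluctuations are localized well inside the bulk. Tracy-Widom's theorem then gives that this current has a deterministic leading term and fluctuations of order $t^{1/3}$ governed by $F_{\GUE}$. Substituting $t=g(k_{N},c)$, the leading term $(\sqrt{k}+\sqrt{N-k})^{2}/(p-q)$ is exactly the time at which the deterministic current reaches the stationary level, so the leading orders cancel and only the $cN^{1/3}$ correction survives; dividing this correction by the intrinsic $t^{1/3}$ scale and inserting the curvature of the limiting current produces the factor $f(\alpha)=(\alpha(1-\alpha))^{1/6}/(\sqrt{\alpha}+\sqrt{1-\alpha})^{4/3}$, and the probability that mixing has not yet occurred converges to $1-F_{\GUE}(cf(\alpha))$.

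The main obstacle is not the law-of-large-numbers bookkeeping but making the first two reductions quantitatively tight enough to yield the \emph{exact} profile and not merely the location of the cutoff. The upper and lower bounds from the monotone coupling must be shown to coincide in the limit, and the Hecke-algebra identity must transfer the segment observable to the line observable with an error that vanishes uniformly on the $N^{1/3}$ window; controlling these errors and ruling out any contribution from the far boundary is where I expect the real work to lie.
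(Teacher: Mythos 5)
Your skeleton matches the paper's strategy (worst-case reduction by monotonicity, comparison with step initial data, Tracy--Widom asymptotics), and your lower bound is essentially the paper's: choose the distinguishing event that the leftmost particle of the chain started from $1^k0^{N-k}$ has not yet reached a neighborhood of $N-k_N$, compare with the step-initial ASEP on $\Z$ by an elementary particle-by-particle coupling, and check that the stationary measure gives vanishing mass to that event. The genuine gap is in your upper bound, at the step where you assert that the total-variation distance is ``sandwiched'' so that it becomes ``a single one-dimensional event'' about the height function. Monotone coupling gives $d^{N,k}(t)\leq \Pb(\text{non-coalescence by time }t)$, but non-coalescence is a statement about the \emph{whole} profile, not about the current at one location: two ordered configurations can agree at one site of the height function and still differ. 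What the paper actually needs is the \emph{two-sided} event $B_N(c)$ that, at time $g(k_N,c)$, both the leftmost particle $\mathcal{L}$ \emph{and} the rightmost hole $\mathcal{R}$ of the chain started from the packed-left state lie within $N^{1/10}$ of $N-k_N$; on that event a separate hitting-time estimate (Theorem 1.9 of \cite{BBHM}) forces full coalescence to the maximal state within an additional $o(N^{1/3})$ time. The crux is then to prove $\Pb(B_N(c))\to F_{\GUE}(cf(\alpha))$: each marginal event converges to $F_{\GUE}(cf(\alpha))$ by the step-initial comparison, but intersecting the two marginals only gives the useless bound $2F_{\GUE}-1$, so one must show the two events are asymptotically the \emph{same} event. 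Your proposal never addresses this joint localization, and without it the upper bound does not close.

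Relatedly, you misplace the role of the Hecke algebra. You describe it as transferring the segment observable to the line and controlling ``boundary contributions''; in the paper that transfer is done by elementary coupling arguments and needs no algebra. The Hecke algebra enters exactly to resolve the joint localization above: the involutive anti-homomorphism $\mathfrak{i}(T_w)=T_{w^{-1}}$ together with the Mallows elements $\mathcal{M}_{a;b}$ (which absorb any probabilistic evolution, $h\,\mathcal{M}_{a;b}=\mathcal{M}_{a;b}$) yields an exact distributional identity equating the law of $(\mathcal{L},\mathcal{R})$ for the ASEP started from the two-sided step configuration with an event for a two-species ASEP in which one first runs the dynamics from the step/identity configuration---where Tracy--Widom applies---and only afterwards brings two overlapping segments into $Q$-equilibrium, where stationary-measure tail bounds finish the job. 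Identifying and exploiting this symmetry (in particular the counterintuitive order: run the dynamics first, multiply by $T_{w^{-1}}$ and the Mallows elements after) is the paper's main new idea, and it is the ingredient your proposal would still have to supply.
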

\begin{proof}
This is an immediate consequence of Theorem \ref{upper} (which gives an upper bound for the limit on the lefthand side), proven in Section \ref{uppersec}, and Theorem \ref{lower} (which gives a lower bound), proven in Section \ref{lowersec}.
\end{proof}

Theorem \ref{main} gives the cutoff window and the cutoff profile (or shape) of ASEP, we refer to Chapter 18 of  the textbook  \cite{LPW17} by Levin-Peres (with contributions by Wilmer) for definitions and examples in the general context of Markov chains. A fortiori, Theorem \ref{main} also gives an independent proof of the cutoff itself, which was previously shown by Labb\'{e}-Lacoin in \cite[Theorem 2]{LL19}. On \cite[page 1556]{LL19} the authors mention that the cutoff window for the process is expected to be $N^{1/3}$ and that the cutoff profile is expected to be a function of the $\mathrm{Airy}_{2}$  process. Our Theorem \ref{main} confirms (and gives a precise meaning to) this conjecture. 

\subsection{Historic overview}

Detailed information about the relaxation of ergodic Markov chains to  equilibrium has been the goal of a vast literature, see e.g. classical works by Aldous \cite{A81}, Diaconis-Shahshahani\cite{DS81}, the review article by Diaconis \cite{D95}, the aforementioned textbook \cite{LPW17}, and references therein. Of particular interest is the so-called \textit{cutoff} phenomenon; a sequence of Markov chains exhibits this phenomenon if the distance between its distribution and the stationary measure abruptly falls from 1 to 0 on a certain time scale. There are different metrics for this distance, leading to different notions of cutoff, see the paper  \cite{HLP16} by Hermon-Lacoin-Peres  for their differences and similarities.  The most commonly used are separation and total-variation cutoff, we study the latter in this paper. Once the cutoff phenomenon is established, it is natural to ask for a more refined information: What happens at the critical time point on a finer scale? The answer to this question is given by the \textit{cutoff window}, which is a finer time scale at which the total-variation distance goes from 1 to 0 \textit{not} abruptly, and a \textit{cutoff profile}, which gives the exact limiting function for the total-variation distance in such a critical scaling. We refer to the article  \cite{Tes20} by Teyssier, and the work of Nestoridi-Thomas \cite{NT20} for recent interesting results about cutoff profiles. 

Cutoff-type questions were previously investigated for ASEP  as well. The first result in this direction was obtained by Diaconis-Ram in \cite{DR04}, where the  so called \textit{pre-cutoff} for a discrete time variation of ASEP was proved  with the use of representations of Hecke algebra. The pre-cutoff is a claim that there is a unique time scale in which the total-variation changes from 1 to 0, but not necessarily abruptly (again, we refer to \cite{LPW17} for formal definitions). In the work
\cite{BBHM} by Benjamini-Berger-Hoffman-Mossel, the pre-cutoff was shown for the ASEP on a segment, the ansatz of \cite{BBHM}  to  bound $d^{N,k}$  by studying hitting times is also used in the present work. Whether  cutoff holds was an open question for over a decade, until it was proven in \cite{LL19} with the use of hydrodynamics of ASEP on $\Z$ and a careful probabilistic analysis of the system.

It is important to mention that the $p=1,q=0$ case of Theorem \ref{main} can be obtained in a quite simple way from the result of  Johansson \cite[Theorem 1.6]{Jo00b}, see \cite[Theorem 1.6]{AHR09} by Angel-Holroyd-Romik. However, it was not clear how one can generalise such type of results  to $q \ne 0$. Our proof of Theorem \ref{main} can be viewed as such a generalisation.

Let us also mention that Theorem \ref{main} seems to be the first example when the fluctuation term $N^{1/3},$ which governs the Kardar-Parisi-Zhang universality class (see Corwin's review \cite{Cor11}), appears as a cutoff window in the study of mixing times, and the Tracy-Widom distribution appears as a cutoff profile.  


\subsection{Our tools and further questions}

Our proof combines several ingredients. The first one is the result \cite[Theorem 3]{TW08b} by Tracy-Widom about fluctuations of ASEP started from the step initial condition, see Theorem \ref{ASEPthm} below. Our argument is based on a comparison of the ASEP dynamics on a segment with the ASEP dynamics on all integers, and \cite[Theorem 3]{TW08b} is (not surprisingly) the original source of the function $F_\GUE$ in Theorem \ref{main}. 

The second ingredient is the use of the multi-species ASEP and its close connection to random walks on Hecke algebra. Using it, we are able to use certain symmetries of Hecke algebra in order to relate the ASEP started from the step initial condition and ASEP started from initial conditions that we are interested in. Somewhat similar ideas were used for TASEP ($q=0$ case) by Borodin-Bufetov in \cite{BB19} and Bufetov-Ferrari in  \cite{BF20} for the study of shocks. An important novelty of this paper is the extension of these ideas to ASEP case, which requires the use of \textit{Mallows elements} in Hecke algebra, see Section \ref{sec6} below. 

The third ingredient is a variety of probabilistic coupling techniques that are needed throughout the paper for all steps of the argument.  

As already mentioned earlier, our approach provides an independent (and rather short) proof of the cutoff for ASEP. Let us mention some further questions where our approach can be of use. 

ASEP is arguably the most well-known representative of a fairly large class of integrable stochastic systems in the KPZ universality class. One can study other systems from this class instead. For example, one can study mixing times of the so called q-TASEP on the interval $[1;N]$. It was shown by Bufetov in \cite{Bu20} that a variety of integrable systems can be interpreted as random walks on Hecke algebras. It is possible that our technique can be used to study the mixing times for them as well.

In a different direction, it is important to note that the aforementioned papers \cite{DR04}, \cite{BBHM}, \cite{LL19}, studied the mixing times of the multi-species ASEP (this process can be also referred to as a random Metropolis scan or a biased card shuffling) on a segment as well. As shown in \cite[Section 4.4]{LL19}, on the level of cutoff this question can be  reduced to a question about the single-species ASEP. The situation is significantly more delicate for the cutoff profile of the multi-species ASEP. Even for TASEP this question was resolved only very recently by Bufetov-Gorin-Romik in \cite{BGR20}; the obtained cutoff profile is the GOE Tracy-Widom distribution function (this is \textit{not} the $F_\GUE$ function from Theorem \ref{main}). Based on this, we conjecture that the cutoff profile for the multi-species ASEP is the GOE Tracy-Widom distribution function as well (and the cutoff window is $N^{1/3}$). Note that the result of \cite{BGR20} was based on highly nontrivial recent developments (see Borodin-Gorin-Wheeler \cite{BGW19}, Bisi-Cunden-Gibbons-Romik \cite{BCGR20}, Galashin \cite{G20}, Dauvergne \cite{D20} and Bufetov-Korotkikh \cite{BK20}). Parts of the approach from the current paper and from  \cite{BGR20} definitely can be of use for this question; nevertheless, this remains an interesting open problem. 

We also mention in Remark \ref{rem:further-questions} below further possible directions. 

\subsection*{Acknowledgments} We are grateful to anonymous referees for their helpful comments. The work of both authors was partially supported by the Deutsche Forschungsgemeinschaft (DFG, German Research Foundation) under Germany's Excellence Strategy -- EXC 2047 ``Hausdorff Center for Mathematics''. P. Nejjar is supported by the DFG  by the CRC 1060
(Projektnummer 211504053). Data sharing not applicable to this article as no datasets were generated or analysed during the current study.

\section{Preliminaries}





\subsection{Basic Coupling and graphical construction}\label{coupling}
While our main result deals with an ASEP on $[1;N]$ which has only particles and holes, it will be important for us to consider
ASEP on $\Z$ with countably many colors  of particles; instead of colors we may speak interchangeably of types, classes, or species of particles.
The state space of this multi-species ASEP is  the set of all bijections $\Z\to \Z$ which we denote by $\mathfrak{S},$ for $w\in \mathfrak{S},$ having $w(i)=j$ is interpreted as position $i$ being occupied by the particle with color $j$.
Our convention is that particles with lower color have priority over particles with higher color.

Here we briefly give  the graphical construction of this multi-species ASEP which  goes back to Harris \cite{Har78}. Let $(\mathcal{P}(z),z\in\Z)$ be a collection of independent, rate $p$ Poisson processes  constructed on some probability space $(\hat{\Omega},\mathcal{A}, \mathbb{P} ).$ By $\mathcal{P}_{t}(z)$ we denote the value of the Poisson process at time $t$.
For fixed $t$, the independence of the Poisson processes implies that  for almost every $\omega\in \hat{\Omega}$ there is a  sequence $(i_{n},n\in \Z)$ of integers such that
\begin{equation*}
\cdots <i_{n-2}< i_{n-1}<i_{n}<i_{n+1}<i_{n+2}<\cdots,
\end{equation*}
and $\mathcal{P}_{t}(i_{n})=0,n\in \Z.$

Given $w \in \mathfrak{S},$ and $z\in \Z$, we denote  the swapped configuration

\begin{equation*}
\sigma_{z,z+1}(w)(i)=
\begin{cases}
w(i+1)  &\mathrm{for} \, i=z\\
w(i-1)  &\mathrm{for} \, i=z+1\\
w(i)   &\mathrm{else}.
\end{cases}
\end{equation*}
The dynamics of ASEP is now as follows: We fix a (possibly random) initial configuration $w_{0}\in \mathfrak{S}$ and define the parameter
\begin{equation*}
Q:=\frac{q}{p}\in [0,1).
\end{equation*}  When at time $\tau$ the Poisson processes $\mathcal{P}(z)$ has a jump, we update the process as follows: If $w_{\tau^{-}}(z)<w_{\tau^{-}}(z+1)$, then we update $w_{\tau}=\sigma_{z,z+1}(w_{\tau^{-}})$, whereas if
 $w_{\tau^{-}}(z)>w_{\tau^{-}}(z+1), $ we toss an independent coin such that with probability $1-Q$, $w_{\tau}=w_{\tau^{-}},$ and with probability $Q$ we have  $w_{\tau}=\sigma_{z,z+1}(w_{\tau^{-}})$. Note that to construct the process up to time $t,$ it suffices to apply these update rules inside each of the finite boxes $[i_{n}+1;i_{n+1}],$ and inside each box there are a.s. only finitely many jumps of the Poisson processes (in particular, there is a.s. a well-defined first jump) during the interval $[0,t]$, and no two jumps happen at the same time, hence the graphical construction is well-defined. This construction also allows to obtain  the multi-species ASEP on a finite segment $[a;b]$ using finitely many Poisson processes $\mathcal{P}(a),\ldots, \mathcal{P}(b-1).$  This is a process on the set of permutations of $[a;b]$; we denote this set of permutations by $S_{a;b}$.

 To recover the ASEP which has only particles and holes, it suffices to fix an integer $k$ and identify all particles whose color lies in  $(-\infty,k]$ as particles, and all particles with color in $(k, +\infty)$ as holes, i.e. we map $w(\cdot)\mapsto 1_{(-\infty,k]}(w(\cdot)).$ This defines a map $X^{k}: \mathfrak{S}\to \{0,1\}^{\Z}$ (resp. a map $X_{[a;b]}^{k}: S_{a;b}\to \{0,1\}^{[a;b]}$ for the  finite ASEP), the image of which is the ASEP on $\Z$ (resp. $[a;b]$) with particles and holes.   In particular for  $k\in [1;N]$ we recover the ASEP in $\Omega^{N,k}$  described in the introduction.

  Later, we will also consider ASEPs with second class particles; they can be obtained by fixing $k_1, k_2\in \Z,k_{1}<k_{2},$ and identifying all particles whose color lies in  $(-\infty,k_1]$ as first class particles, all particles whose color lies in  $(k_1,k_2]$ as second  class particles, and all particles with color in $(k_2,\infty)$ as holes.

 The graphical construction  allows us to couple different ASEPs together: We use the same collection of Poisson processes to construct ASEPs which start from different initial configurations and/or at different time points. We call this coupling the \textit{basic coupling}, which also allows us to couple ASEPs on a segment  $[a;b]$ with ASEPs on $\Z$.
 \subsection{Invariant measure for ASEP }\label{secinv}

Define the \textit{Mallows measure} on $S_{a;b}$ as
$$
\mathcal{Q}_{a;b}(w) :=  Q^{(b-a+1)(b-a)/2 - l \left( w \right)} Z_{a;b},
$$
where $l(w)$ is the number of inversions of $w$ and
\begin{equation}\label{Zab}
Z_{a;b} := \left( \sum_{w \in S_{a;b}} Q^{(b-a+1)(b-a)/2 - l \left( w \right)} \right)^{-1} = \left( \sum_{w \in S_{a;b}} Q^{ l \left( w \right)} \right)^{-1} = \prod_{i=1}^{b-a+1} \frac{1-Q}{1-Q^i}.
\end{equation}
It is immediate that the Mallows measure is invariant for the multi-species ASEP on $[a;b]$ as described in Section \ref{coupling}. Furthermore,  under the map $X_{[a;b]}^{k}: S_{a;b}\to \{0,1\}^{[a;b]}$ (with $k\in [a;b]$) from  Section \ref{coupling}, the Mallows measure becomes the stationary measure for the ASEP on $[a;b]$ with $k-a+1$ particles and  $b-k$ holes which we denote by $\pi_{[a;b],k}$. For the case $[a;b] = [1;N]$ we will use a shortened notation $\pi_{[1;N],k}=:\pi_{N,k}.$

\subsection{Single-species ASEPs}
Given $\xi\in \Omega^{N,k}$, we attach a label (an integer) to each particle from right to left:
Let
\begin{equation*}
x_{k}^{\xi}(0)<\cdots < x_{1}^{\xi}(0)
\end{equation*}
be the initial positions of the $k$ particles of $\xi$. We denote by $x_{i}^{\xi}(t)$ the position at time $t$  of the particle that started in $x_{i}^{\xi}(0)$.

We will later often compare finite ASEPs with positive recurrent ASEPs on $\Z.$ The latter  are supported  on  $\bigcup_{Z\in \Z}\Omega_{Z},$ where for $Z\in \Z$
\begin{equation}\label{omega}
\Omega_{Z}=\left\{\zeta\in \{0,1\}^{\Z}:\sum_{j<Z}\zeta(j)=\sum_{j\geq Z}1-\zeta(j)<\infty\right\}.
\end{equation}
We note that
there is a partial order on $\Omega_{Z}$: For $\zeta^{\prime}, \zeta^{\prime\prime} \in \Omega_{Z}$, we define
\begin{equation}\label{order}
\zeta^{\prime}\preceq\zeta^{\prime\prime}\iff \sum_{j=r}^{\infty}1-\zeta^{\prime\prime }(j)\leq \sum_{j=r}^{\infty}1-\zeta^{\prime}(j)\quad \mathrm{for \, all\,}r\in \Z.
\end{equation}
It is easy to see that under the basic coupling, this order is preserved, i.e.  if $\zeta^{\prime}\preceq\zeta^{\prime\prime},$ then also $\zeta^{\prime}_{t}\preceq\zeta^{\prime\prime}_{t},t\geq 0.$

Likewise, we use the same symbol to denote the analogous partial order on $\{0,1\}^{[a;b]}$:
For $\xi^{\prime}, \xi^{\prime\prime}\in \{0,1\}^{[a;b]}$, we define
\begin{equation}\label{order2}
\xi^{\prime}\preceq\xi^{\prime\prime}\iff \sum_{j=r}^{b}1-\xi^{\prime\prime }(j)\leq \sum_{j=r}^{b}1-\xi^{\prime}(j)\quad \mathrm{for \, all\,}r\in [a;b].
\end{equation}

We denote the minimal and maximal element in $ \Omega^{N,k_{N}}$ w.r.t.  this order
as  $\xi^{0},\xi^1$:  \begin{equation}\label{xi01}
\xi^{0}=\mathbf{1}_{[1; k_{N}]}\in \Omega^{N,k_{N}},\quad \xi^{1}=\mathbf{1}_{[N-k_{N}+1; N]}\in \Omega^{N,k_{N}}.
\end{equation}
We define the corresponding elements in $\Omega_{N+1-k_{N}}$ as
\begin{equation}
\label{zeta01}\zeta^{0}=\mathbf{1}_{[1;k_{N}]}+\mathbf{1}_{\Z_{> N}}\in \Omega_{N+1-k_{N}},\quad \zeta^{1}=\mathbf{1}_{\Z_{>(N-k_{N})}}\in \Omega_{N+1-k_{N}}.
\end{equation}
Analogous to the finite case, we  label the particles of $\zeta^{0}$ from right to left.  We set
$\{i\in \Z: \zeta^{0}(i)=1\}=\{x_{i}^{\zeta^{0}}(0),i\leq k_{N}\}$ with $x_{k_{N}}^{\zeta^{0}}(0)< x_{k_{N}-1}^{\zeta^{0}}(0)<\cdots$  and denote by $x_{i}^{\zeta^{0}}(t)$ the position of the particle of $\zeta^{0}$ that started in $x_{i}^{\zeta^{0}}(0)$.
We also label the holes of $\zeta^{0}, $ but   from right to left, i.e. we write
$\{i\in \Z: \zeta^{0}(i)=0\}=\{H_{i}^{\zeta^{0}},i\leq k_{N}\}$ with $H_{k_{N}}(0)>H_{k_{N}-1}(0)>\cdots$, and $H_{i}^{\zeta^{0}}(t)$ is the  position at time $t$ of the hole that started in $H_{i}^{\zeta^{0}}(0).$ We label the particles and holes of $\zeta^{1}$ in the same way.

Given a particle configuration $\zeta\in \{0,1\}^{\Z}$
we define the (possibly infinite) position  of the leftmost particle and rightmost hole of
$\zeta$  as
\begin{equation}
\begin{aligned}
\label{parthole0}
\mathcal{L}(\zeta)=\inf\{i\in \Z:\zeta(i)=1\}\quad \mathcal{R}(\zeta)=\sup\{i\in \Z:\zeta(i)=0\}.
\end{aligned}
\end{equation}
We define analogously $\mathcal{L}(\xi),\mathcal{R}(\xi)$ for  $\xi \in \{0,1\}^{[a;b]}$.
With this notation,  we have in particular
\begin{equation}
\begin{aligned}
\label{parthole}
x_{k_{N}}^{\zeta^{0}}(t)=\mathcal{L}(\zeta_{t}^{0})\quad H_{k_{N}}^{\zeta^{0}}(t)=\mathcal{R}(\zeta_{t}^{0}).\end{aligned}
\end{equation}

Finally, throughout the paper, we will often omit writing integer brackets.


 \subsection{ASEP with step initial data}\label{stepsec}

Let us introduce the Tracy-Widom $\GUE$ distribution. This probability distribution originates in the theory of random matrices \cite{TW94}, namely  it is the limit law of the rescaled largest eigenvalue of
a matrix drawn from the Gaussian Unitary Ensemble (GUE). Its
cumulative distribution function is given by
\begin{equation}\label{FGUE2}
F_{\GUE}(s)=\sum_{n=0}^{\infty} \frac{(-1)^{n}}{n!}  \int_{s}^{\infty}\dx x_{1}\ldots  \int_{s}^{\infty}\dx x_{n}\det(K_{2}(x_{i},x_{j})_{1\leq i,j\leq n}), \end{equation}
where $K_{2}(x,y)$ is the Airy kernel  $K_{2}(x,y)=\frac{Ai(x)Ai^{\prime}(y)-Ai(y)Ai^{\prime}(x)}{x-y},x\neq y, $ defined for $x=y$ by continuity and $Ai$ is the Airy function.

The ASEP with step initial data is the ASEP on $\Z$ which starts from the initial configuration  $x_{m}^{\mathrm{step}}(0)=-m,m> 0$. The following fluctuation result plays an important role in our proof.

  \begin{tthm}[  \cite{TW08b}, Theorem 3 ] \label{ASEPthm}
Consider ASEP with step initial data and $p>q$. Let $\gamma=p-q,m>0, \sigma=m/t, c_1=1-2\sqrt{\sigma},c_2 =\sigma^{-1/6}(1-\sqrt{\sigma})^{2/3}$.
Then, uniformly for $\sigma $ in a compact subset of $(0,1),$ we have
\begin{equation}
\lim_{t \to \infty}\Pb\left(\frac{x_{m}^{\mathrm{step}}(t/\gamma)-c_1 t}{-c_{2}t^{1/3}}\leq s\right)=F_{\mathrm{GUE}}(s).
\end{equation}
\end{tthm}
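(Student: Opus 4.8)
The plan is to exploit the exact integrability of ASEP and follow the two-stage strategy underlying Theorem \ref{ASEPthm}: first derive a closed Fredholm-determinant formula for the one-point distribution of $x_m^{\mathrm{step}}(t/\gamma)$, and then carry out a rigorous steepest-descent analysis of that formula in the scaling window $x = c_1 t - c_2 s\, t^{1/3}$. The appearance of the exponent $1/3$ and of $F_{\GUE}$ is the signature of the KPZ universality class; the virtue of the integrable route is that both can be extracted rigorously rather than heuristically.

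First I would record the transition probabilities of ASEP obtained from the coordinate Bethe ansatz. For $N$ particles moving from sites $y_1 < \cdots < y_N$ to $x_1 < \cdots < x_N$ over a time interval of length $T$, the probability is a sum over $\sigma \in S_N$ of $N$-fold contour integrals whose integrand is built from the single-particle factors $\xi_i^{x_i - y_i} e^{\epsilon(\xi_i)T}$, with $\epsilon(\xi) = p\xi^{-1} + q\xi - 1$, and the two-body scattering factors $S_{\alpha\beta} = -(p + q\xi_\alpha\xi_\beta - \xi_\alpha)/(p + q\xi_\alpha\xi_\beta - \xi_\beta)$. Step initial data corresponds to $y_i = -i$ and $N \to \infty$. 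Next I would pass from this joint law to the marginal distribution of $x_m^{\mathrm{step}}$ by summing the transition probabilities over all admissible final configurations. The decisive step is the Tracy--Widom combinatorial identity, which collapses the double sum (over $S_N$ and over configurations) into a single tractable object. The result is a representation of the distribution function of $x_m^{\mathrm{step}}(t/\gamma)$ as a contour integral in an auxiliary variable $\mu$ of a Fredholm determinant $\det(I + \mu K)$ on a contour $\Gamma$, where the kernel $K$ carries the essential factor $\xi^{x} e^{\epsilon(\xi)T}$ and the $\mu$-integrand involves a product of the $Q$-dependent factors $\prod_k (1 - \mu Q^k)$, with $Q = q/p$.

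Then comes the asymptotics. Setting $T = t/\gamma$ and substituting $x = c_1 t - c_2 s\, t^{1/3}$, I would analyze $K$ by steepest descent. At the critical density $\sigma = m/t$ the phase $x\log\xi + \epsilon(\xi)\,t/\gamma$ develops a \emph{coalescing pair of saddle points}; this degeneracy is exactly the mechanism that produces the Airy function, and it is here that the constants $c_1$ and $c_2$ are pinned down as the location and the curvature scale of the double saddle. Rescaling the spectral variable by $t^{1/3}$ about this saddle, the kernel $K$ converges to the Airy kernel $K_2$, and after deforming the $\mu$-contour so that the auxiliary integral and the $Q$-product trivialize in the limit, the Fredholm determinant converges to $\sum_{n\geq 0}\frac{(-1)^n}{n!}\int_s^\infty\cdots\int_s^\infty \det(K_2(x_i,x_j))_{1\leq i,j\leq n} = F_{\GUE}(s)$.

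The main obstacle is the rigorous asymptotic analysis rather than the algebra. One must choose genuine steepest-descent contours passing through the double saddle, bound the off-critical contributions so that only a neighborhood of the critical point survives, and justify exchanging the limit with the Fredholm expansion by establishing trace-class convergence of the rescaled kernels; simultaneously one must control the auxiliary $\mu$-integral and the infinite $Q$-product. Securing all these estimates \emph{uniformly} for $\sigma$ in a compact subset of $(0,1)$ --- which is exactly what the present paper requires, as opposed to convergence at a single fixed density --- is the most delicate point, since the saddle location and the relevant contours vary with $\sigma$ and the uniform bounds must be maintained across the whole compact range.
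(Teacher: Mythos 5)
This statement is not proved in the paper at all: Theorem \ref{ASEPthm} is imported verbatim from Tracy--Widom (\cite{TW08b}, Theorem 3), and the paper's ``proof'' consists of the citation alone. So the only meaningful comparison is between your proposal and the original Tracy--Widom argument. On that score, your outline is a faithful précis of how the theorem was actually established: Bethe-ansatz transition probabilities for $N$-particle ASEP, the combinatorial identity that collapses the sum over permutations and final configurations, the resulting representation of $\Pb(x_m^{\mathrm{step}}(t/\gamma)\le x)$ as a $\mu$-integral of a Fredholm determinant involving the product $\prod_k(1-\mu Q^k)$, and finally a steepest-descent analysis at a coalescing pair of saddle points that pins down $c_1$, $c_2$ and produces the Airy kernel. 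This is indeed the route, and you correctly identify uniformity in $\sigma$ as the delicate point.

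However, as a standalone proof your text has a genuine gap precisely where all the difficulty sits: every decisive step is named rather than carried out. The Fredholm determinant representation is itself a deep result (it occupied Tracy and Widom across several papers, and the collapse of the double sum requires nontrivial symmetrization identities and careful contour choices dictated by whether $Q<1$); and the asymptotic analysis is not a routine saddle-point computation --- the $\mu$-contour deformation crosses infinitely many poles of $\prod_k(1-\mu Q^k)^{-1}$, generating contributions that must be resummed, and the trace-class convergence of the rescaled kernels to $K_2$ must be established with bounds uniform over the compact range of $\sigma$. Your proposal acknowledges these obstacles but does not resolve any of them, so it should be read as a roadmap to the literature rather than a proof. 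Within the present paper the correct and intended resolution is simply to cite \cite{TW08b}, which is what the authors do; the paper's actual contribution begins afterwards, in converting this fixed-density fluctuation result into the cutoff profile via Corollary \ref{cor}, the hitting-time bounds, and the Hecke-algebra identity of Theorem \ref{alexey}.
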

We will need Theorem \ref{ASEPthm} in the form of the following corollary. Note that
the $c'N^{\kappa},c'' N^{\kappa'}$ terms in the following are irrelevant in the $N\to\infty$ limit as they get absorbed by the $N^{1/3}$ fluctuations.
 \begin{cor}\label{cor}
We have for $k_{N}$ with $k_N /N \to \alpha \in (0,1)$ and arbitrary $\kappa,\kappa'\in [0,1/3)$  and $c',c''\in \R$ that
\begin{equation}
\lim_{N \to \infty}\Pb\left(x_{k_{N}+c'N^{\kappa}}^{\mathrm{step}}(g(k_{N},c))\leq N-2k_{N} +c'' N^{\kappa'}\right)=1-F_{\mathrm{GUE}}(cf(\alpha)),
\end{equation}
where $f(\alpha)=\frac{(\alpha(1-\alpha))^{1/6}}{(\sqrt{\alpha}+\sqrt{1-\alpha})^{4/3}}.$
\end{cor}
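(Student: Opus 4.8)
The plan is to match the parameters of the corollary with those of Theorem \ref{ASEPthm} and then carry out a Taylor expansion. I would set
\[
t := \gamma\, g(k_N,c) = (\sqrt{k_N}+\sqrt{N-k_N})^2 + cN^{1/3}, \qquad m := k_N + c'N^{\kappa},
\]
so that $x_{k_N+c'N^{\kappa}}^{\mathrm{step}}(g(k_N,c)) = x_m^{\mathrm{step}}(t/\gamma)$ is precisely the random variable appearing in Theorem \ref{ASEPthm}, with $\sigma = m/t$. The first point to check is that $\sigma$ stays in a compact subset of $(0,1)$: since $(\sqrt{k_N}+\sqrt{N-k_N})^2 = N + 2\sqrt{k_N(N-k_N)}$ and $k_N/N\to\alpha$, one has $\sigma \to \sigma_\infty := \alpha/(\sqrt{\alpha}+\sqrt{1-\alpha})^2 \in (0,1)$, so the uniform-in-$\sigma$ form of Theorem \ref{ASEPthm} is legitimately applicable for all large $N$.

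Next I would rewrite the event in the standardized variable. Writing $Y := (x_m^{\mathrm{step}}(t/\gamma) - c_1 t)/(-c_2 t^{1/3})$, the inequality $x_m^{\mathrm{step}}(t/\gamma) \le N-2k_N + c''N^{\kappa'}$ is equivalent, using $c_2>0$, to $Y \ge s_N$, where
\[
s_N := \frac{c_1 t - (N-2k_N) - c''N^{\kappa'}}{c_2 t^{1/3}}.
\]
Hence $\Pb(\,\cdots) = \Pb(Y\ge s_N) = 1 - \Pb(Y < s_N)$, and everything reduces to computing $\lim_N s_N$.

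For the computation of $\lim_N s_N$ the useful identities are $c_1 t = (1-2\sqrt{\sigma})t = t - 2\sqrt{mt}$ and, with $A := (\sqrt{k_N}+\sqrt{N-k_N})^2$, the exact relation $\sqrt{k_N A} = k_N + \sqrt{k_N(N-k_N)}$, which gives $A - 2\sqrt{k_N A} = N - 2k_N$. Expanding $\sqrt{mt} = \sqrt{k_N A}\,(1+c'N^{\kappa}/k_N)^{1/2}(1 + cN^{1/3}/A)^{1/2}$ to first order, the $c'$-correction is $O(N^{\kappa})=o(N^{1/3})$ and the leading $N-2k_N$ cancels, leaving
\[
c_1 t - (N-2k_N) = t - 2\sqrt{mt} - (A - 2\sqrt{k_N A}) = cN^{1/3}\Big(1 - \sqrt{k_N/A}\Big) + o(N^{1/3}).
\]
Since $\sqrt{k_N/A}\to\sqrt{\sigma_\infty}$, $c''N^{\kappa'}=o(N^{1/3})$, and $t^{1/3}\sim N^{1/3}(\sqrt{\alpha}+\sqrt{1-\alpha})^{2/3}$ while $c_2 \to \sigma_\infty^{-1/6}(1-\sqrt{\sigma_\infty})^{2/3}$, substituting the value of $\sigma_\infty$ and simplifying yields exactly $s_N \to cf(\alpha)$ with $f(\alpha) = (\alpha(1-\alpha))^{1/6}/(\sqrt{\alpha}+\sqrt{1-\alpha})^{4/3}$; this is the one routine but slightly lengthy algebraic step, and it is precisely the mechanism by which the $c',c''$ perturbations with exponents below $1/3$ are absorbed.

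The delicate point, and the main obstacle, is the passage to the limit: both $\sigma=\sigma_N$ and the threshold $s_N$ vary with $N$, whereas Theorem \ref{ASEPthm} only provides convergence at a \emph{fixed} $s$ (uniformly in $\sigma$). I would resolve this with a monotonicity squeeze. Fix $\varepsilon>0$; by continuity of $F_\GUE$ choose $\delta>0$ so that $F_\GUE(cf(\alpha)\pm\delta)$ lie within $\varepsilon$ of $F_\GUE(cf(\alpha))$. For $N$ large we have $cf(\alpha)-\delta \le s_N \le cf(\alpha)+\delta$ and $\sigma_N$ in a fixed compact subset of $(0,1)$, so monotonicity of distribution functions combined with the uniform-in-$\sigma$ convergence of Theorem \ref{ASEPthm} at the two fixed points $cf(\alpha)\pm\delta$ traps $\Pb(Y\le s_N)$ (and likewise $\Pb(Y<s_N)$) between quantities converging to within $\varepsilon$ of $F_\GUE(cf(\alpha))$. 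Since $\varepsilon$ is arbitrary, $\Pb(Y<s_N)\to F_\GUE(cf(\alpha))$, and therefore $\Pb(\,\cdots)=1-\Pb(Y<s_N) \to 1 - F_\GUE(cf(\alpha))$, which is the claim.
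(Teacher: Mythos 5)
Your proposal is correct and follows essentially the same route as the paper: both match the parameters of Theorem \ref{ASEPthm} via $t=\gamma\,g(k_N,c)$, $m=k_N+c'N^{\kappa}$, standardize the event, and Taylor-expand to show the threshold converges to $cf(\alpha)$, with the sub-$N^{1/3}$ perturbations absorbed into the error terms. Your explicit monotonicity squeeze at $cf(\alpha)\pm\delta$ just makes rigorous the limit passage that the paper leaves implicit in its ``$+o(1)$'' inside the probability, and your use of the exact identity $A-2\sqrt{k_N A}=N-2k_N$ is a slightly cleaner organization of the same computation.
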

\begin{proof}
We set $a=a(N)=k_{N}/N \to \alpha$ and $D=(\sqrt{a}+\sqrt{1-a})^{2}.$ Next we define $\tilde{N}=DN+cN^{1/3}=g(k_{N},c)(p-q) $. Writing everything in terms of $\tilde{N},$ we get (with $\sigma$ as in Theorem \ref{ASEPthm}) that \begin{equation*}\begin{aligned}
&k_{N}+c'N^{\kappa}=\frac{a}{D}\tilde{N}-\frac{ac}{D^{4/3}}\tilde{N}^{1/3}+o(\tilde{N}^{1/3})
\\&\sigma=  (k_{N}+c'N^{\kappa})/\tilde{N}=\frac{a}{D}-\frac{ac}{D^{4/3}}\tilde{N}^{-2/3}+o(\tilde{N}^{-2/3})
\\&N-2k_{N}+c''N^{\kappa'}=(1-2\sqrt{\sigma})\tilde{N}-c\tilde{N}^{1/3}\left(\frac{(1-2a)}{D^{4/3}}+\frac{\sqrt{a}}{D^{5/6}}\right)+o(\tilde{N}^{1/3}).
\end{aligned}\end{equation*}
We may thus rewrite
\begin{equation*}
\begin{aligned}
&\Pb\left(x_{k_{N}+c'N^{\kappa}}^{\mathrm{step}}(g(k_{N},c))\leq N-2k_{N} +c'' N^{\kappa'}\right)
\\&=1-\Pb\left(x_{k_{N}+c'N^{\kappa}}^{\mathrm{step}}(g(k_{N},c))>N-2k_{N} +c'' N^{\kappa'}\right)
\\&=1-\Pb\left(\frac{x_{\frac{a}{D}\tilde{N}-\frac{ac}{D^{4/3}}\tilde{N}^{1/3}+o(\tilde{N}^{1/3})}(\tilde{N}/(p-q))-(1-2\sqrt{\sigma})\tilde{N}}{-\tilde{N}^{1/3}}< c\left(\frac{(1-2a)}{D^{4/3}}+\frac{\sqrt{a}}{D^{5/6}}\right)+o(1)\right).
\end{aligned}
\end{equation*}
In order to apply Theorem \ref{ASEPthm} with $\tilde{N}=t,$ we still have to divide by $\sigma^{-1/6}(1-\sqrt{\sigma})^{2/3}$.
An elementary computation reveals 
\begin{equation*}
\frac{\left(\frac{(1-2a)}{D^{4/3}}+\frac{\sqrt{a}}{D^{5/6}}\right)}{\sigma^{-1/6}(1-\sqrt{\sigma})^{2/3}}=\frac{(a(1-a))^{1/6}}{D^{2/3}}+o(1)\to_{N\to\infty}\frac{(\alpha(1-\alpha))^{1/6}}{(\sqrt{\alpha}+\sqrt{1-\alpha})^{4/3}}= f(\alpha).
\end{equation*} 
Using Theorem \ref{ASEPthm} thus yields
\begin{align*}
&\lim_{\tilde{N}\to \infty}\Pb\left(\frac{x_{\frac{a}{D}\tilde{N}-\frac{ac}{D^{4/3}}\tilde{N}^{1/3}+o(\tilde{N}^{1/3})}(\tilde{N}/(p-q))-(1-2\sqrt{\sigma})\tilde{N}}{-\sigma^{-1/6}(1-\sqrt{\sigma})^{2/3}\tilde{N}^{1/3}}< c\frac{\left(\frac{(1-2a)}{D^{4/3}}+\frac{\sqrt{a}}{D^{5/6}}\right)}{\sigma^{-1/6}(1-\sqrt{\sigma})^{2/3}}+o(1)\right)
\\&=F_{\GUE}(cf(\alpha)),
\end{align*}
finishing the proof.
\end{proof}

\section{Upper  bound}\label{uppersec}
The aim of this section is to show that $1-F_{\GUE}(cf(\alpha))$ is an upper bound for the cutoff profile.    More precisely, we will show the following Theorem, see Figure \ref{Graph} for an illustration of the proof idea.
\begin{tthm}\label{upper}Let $k=k_{N}$ with $k_N /N \to \alpha \in (0,1)$. Then
we have for $c\in \R$
\begin{equation*}
\limsup_{N\to \infty}d^{N,k_{N}}\left(g(k_{N},c)\right) \leq 1-F_{\GUE}(cf(\alpha)).
\end{equation*}

\end{tthm}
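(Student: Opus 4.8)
The plan is to bound the mixing distance by the coalescence probability of the two extremal initial conditions and then to read off that probability from the one-point fluctuations of the step-initialized ASEP supplied by Corollary \ref{cor}.

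First I would run the basic coupling simultaneously from $\xi^{0}$, from $\xi^{1}$, from an arbitrary $\xi\in\Omega^{N,k_{N}}$, and from a stationary configuration $\eta_{0}\sim\pi_{N,k_{N}}$, all on the segment $[1;N]$. Since $\xi^{0}$ and $\xi^{1}$ are the minimal and maximal elements of $\Omega^{N,k_{N}}$ for $\preceq$, every one of these configurations is sandwiched as $\xi^{0}\preceq\,\cdot\,\preceq\xi^{1}$, and this order is preserved by the dynamics. Hence on the event $\{\xi^{0}_{t}=\xi^{1}_{t}\}$ the squeeze forces $\xi_{t}=\eta_{t}=\xi^{0}_{t}$. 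The coupling inequality then yields $\|P_{t}^{\xi}-\pi_{N,k_{N}}\|_{\mathrm{TV}}\le\Pb(\xi_{t}\ne\eta_{t})\le\Pb(\xi^{0}_{t}\ne\xi^{1}_{t})$, and taking the maximum over $\xi$ gives $d^{N,k_{N}}(t)\le\Pb(\xi^{0}_{t}\ne\xi^{1}_{t})$. Everything now reduces to showing $\limsup_{N}\Pb\big(\xi^{0}_{g}\ne\xi^{1}_{g}\big)\le 1-F_{\GUE}(cf(\alpha))$ with $g=g(k_{N},c)$.

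Next I would identify the bottleneck for coalescence. Writing $x_{i}^{\xi^{0}}(t)\le x_{i}^{\xi^{1}}(t)$ for the ordered particle labels, the two configurations agree iff all labels agree; since $\xi^{1}$ is already packed against the right boundary (it is essentially the mode of the stationary Mallows measure for $p>q$) and only fluctuates there on scale $N^{1/3}$, the last discrepancy to disappear should be carried by the left-most particle of $\xi^{0}$, which must travel from $1$ to the left edge of the packed pile near $N-k_{N}$, a distance of order $N-k_{N}$. Thus I expect $\Pb(\xi^{0}_{g}\ne\xi^{1}_{g})$ to be governed, up to a $o(1)$ error from the subdominant labels, by the event that $\mathcal{L}(\xi^{0}_{g})$ has not yet reached $\approx N-k_{N}$. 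To compute this I would pass to the line via the configurations $\zeta^{0},\zeta^{1}$ of \eqref{zeta01}, coupling them to $\xi^{0},\xi^{1}$ by the basic coupling. The infinite hole-reservoir to the left of $1$ and particle-reservoir to the right of $N$ are designed so that the left-most block of $\zeta^{0}$ behaves like the right-most $k_{N}$ particles of the step initial condition, shifted by $k_{N}$: the left-most block particle plays the role of $x^{\mathrm{step}}_{k_{N}}$, its initial position $1$ corresponding to $-k_{N}$ and its target $N-k_{N}$ corresponding to the threshold $N-2k_{N}$. Corollary \ref{cor} then delivers exactly $\Pb\big(x^{\mathrm{step}}_{k_{N}}(g)\le N-2k_{N}\big)\to 1-F_{\GUE}(cf(\alpha))$, the sought asymptotic coalescence-failure probability.

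The main obstacle is making this last identification rigorous. Two points require care: controlling the boundary/reservoir effects in the comparison between the confined segment dynamics (reflecting at $1$ and $N$) and the open line dynamics (where discrepancies may leak past the boundaries), so as to obtain an honest stochastic domination in the correct direction; and showing that the labels other than the left-most one coalesce strictly earlier with probability $1-o(1)$, so that the single-coordinate event $\{x^{\mathrm{step}}_{k_{N}}(g)\le N-2k_{N}\}$ genuinely captures $\Pb(\xi^{0}_{g}\ne\xi^{1}_{g})$ up to $o(1)$. I expect both to be handled by monotone couplings combined with the freedom in the shifts $c'N^{\kappa},c''N^{\kappa'}$ built into Corollary \ref{cor}, and — for the exact matching of the block dynamics with genuine step data — by the algebraic identities coming from the multi-species ASEP and the Hecke-algebra picture announced in the introduction. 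Since these lower-order shifts are absorbed by the $N^{1/3}$ window, they do not affect the limiting profile $1-F_{\GUE}(cf(\alpha))$.
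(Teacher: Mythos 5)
Your opening reduction is sound and matches the paper's: the coupling/sandwich argument giving $d^{N,k_{N}}(t)\le\Pb(\xi^{0}_{t}\ne\xi^{1}_{t})$ is essentially the paper's Proposition \ref{hineq2} (the paper phrases it via the hitting time $\mathfrak{h}=\inf\{t:\xi^{0}_{t}=\xi^{1}\}$ of the maximal state, but the content is the same). The genuine gap is in the second half. To bound the mixing distance from \emph{above} you need a \emph{lower} bound of the form $\Pb\bigl(\mathcal{L}(\xi^{0}_{g})\approx N-k_{N}\bigr)\ge F_{\GUE}(cf(\alpha))-o(1)$, i.e.\ that the leftmost particle \emph{has} traveled far enough with at least the Tracy--Widom probability. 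Monotone coupling with step initial data goes in the wrong direction for this: under the basic coupling the segment particles are blocked both by the wall at $N$ and (compared to step data, which has infinitely many particles to the left blocking left jumps) satisfy $x^{\xi^{0}}_{i}(t)\le x^{\mathrm{step}(k_{N})}_{i}(t)$ --- this is exactly the paper's Proposition \ref{3.1}, and it yields only $\Pb(\mathcal{L}(\xi^{0}_{g})>\cdot)\le\Pb(x^{\mathrm{step}}_{k_{N}}(g)>\cdot)$, an \emph{upper} bound on the coalescence probability. That is why this comparison is used in the paper for the lower bound on $d^{N,k_{N}}$ (Theorem \ref{lower}), and why it cannot close your argument. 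The shifts $c'N^{\kappa}$, $c''N^{\kappa'}$ in Corollary \ref{cor} are irrelevant to this difficulty: they adjust locations within the $N^{1/3}$ window but do not reverse the direction of a stochastic domination.

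The step you defer to ``the Hecke-algebra picture announced in the introduction'' is in fact the entire content of the paper's Theorem \ref{alexey}: an exact distribution identity (via the anti-involution $\mathfrak{i}$ and the Mallows elements, Proposition \ref{prop:DistrIdent1} and Corollary \ref{cor:Req}) that equates an event for ASEP started from (a randomized version of) $\zeta^{0}$ with an event for ASEP run \emph{first} from the identity/step configuration and \emph{then} hit with Mallows randomizations; only through this identity does Corollary \ref{cor} become applicable to $\zeta^{0}$, and in the correct direction. Separately, your claim that ``the labels other than the left-most one coalesce strictly earlier with probability $1-o(1)$'' is asserted, not proved, and on the line it is not even the right event: one needs joint control of the leftmost particle \emph{and} the rightmost hole of $\zeta^{0}_{g}$ (the event $B_{N}(c)$), after which the paper still needs the hitting-time estimate of Benjamini--Berger--Hoffman--Mossel (Theorem \ref{BBHMthm}, used in Proposition \ref{0}) to show the residual coalescence takes only $O(N^{1/10+\varepsilon})=o(N^{1/3})$ extra time. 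So the skeleton of your proposal is the paper's, but the two load-bearing ingredients --- the Hecke-algebra distribution identity and the residual hitting-time bound --- are missing rather than sketched, and the substitute you propose (monotone coupling with step data) provably cannot do the job.
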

The starting point for showing Theorem \ref{upper} is the following Theorem \ref{alexey}.
The proof of  Theorem \ref{alexey} exploits the link between multi-species ASEP and Hecke algebras, and is postponed to Section \ref{sec6}.

 \begin{tthm}\label{alexey}
Let  $\mathcal{L}(\zeta^{0}_{t}),\mathcal{R}(\zeta^{0}_{t})$  be as in \eqref{parthole}.
Define for $c\in \R$
\begin{equation*}
\begin{aligned}
B_{N}(c)=&\{   \mathcal{L}(\zeta^{0}_{g(k_{N},c)})>  N-k_N -N^{1/10}\}
\\&\cap    \{       \mathcal{R}(\zeta^{0}_{g(k_{N},c)})\leq N-k_N+N^{1/10}          \}.
\end{aligned}
\end{equation*}
Then
\begin{equation*}
\lim_{N\to\infty}\Pb(B_{N}(c))=F_{\GUE}(c f(\alpha)).
\end{equation*}
\end{tthm}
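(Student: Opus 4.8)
The plan is to read off $F_{\GUE}(cf(\alpha))$ from Corollary~\ref{cor}, which already produces exactly this limit for a \emph{single} labelled coordinate of the step initial condition, and to reduce the joint statement about $\mathcal{L}(\zeta^0_{g(k_N,c)})$ and $\mathcal{R}(\zeta^0_{g(k_N,c)})$ to that corollary. I first fix the macroscopic picture. Viewing the left block of $\zeta^0$ as a shifted step configuration gives $\mathcal{L}(\zeta^0_t)\approx x^{\mathrm{step}}_{k_N}(t)+k_N+1$, so by Corollary~\ref{cor} the leftmost particle lies at $N-k_N+\Or(N^{1/3})$. The involution $i\mapsto N+1-i$ together with the particle--hole map $\zeta\mapsto 1-\zeta$ preserves the jump rates $(p,q)$ and carries $\zeta^0$ to the same configuration with $k_N$ replaced by $N-k_N$; since $g(k_N,c)=g(N-k_N,c)$ and $f(\alpha)=f(1-\alpha)$, it turns the rightmost--hole constraint at density $\alpha$ into a leftmost--particle constraint at density $1-\alpha$. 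Consequently $\mathcal{R}(\zeta^0_t)$ also sits at $N-k_N+\Or(N^{1/3})$, and each of the two constraints in $B_N(c)$ is a window of width $N^{1/10}\ll N^{1/3}$ about the common location $N-k_N$, each one separately having probability $\to F_{\GUE}(cf(\alpha))$ by Corollary~\ref{cor}.

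The real content is that the \emph{intersection} $\Pb(B_N(c))$ converges to the \emph{same} $F_{\GUE}(cf(\alpha))$, rather than to a product or a convolution. Encoding the two events as $\{\chi_L<cf(\alpha)\}$ and $\{\chi_R<cf(\alpha)\}$ for the rescaled fluctuations of $\mathcal{L}$ and $\mathcal{R}$, and using that each marginal is already $F_{\GUE}(cf(\alpha))$ for \emph{every} $c\in\R$, one sees that $\Pb(B_N(c))\to F_{\GUE}(cf(\alpha))$ is equivalent to the almost sure coincidence $\chi_L=\chi_R$ in the limit. Heuristically this is forced by current conservation: the rearmost left--block particle crossing $N-k_N$ to the right is the same tagged event as the rearmost gap--hole crossing $N-k_N$ to the left, so both constraints record one and the same integrated current through $N-k_N$, whose fluctuations are Tracy--Widom. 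Proving this coincidence \emph{exactly} is the step I expect to be the main obstacle: a soft monotone coupling only yields one--sided comparisons, because the right--hand wall genuinely influences the rear particle precisely at the critical time $g(k_N,c)$.

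To establish the required identity I would lift $\zeta^0$ to the multi--species (coloured) ASEP and exploit its realisation as a random walk on the Hecke algebra. The relevant mechanism is a colour--position type symmetry, implemented through the Mallows elements of the Hecke algebra (this is where the genuine asymmetry $q\neq0$, as opposed to the TASEP case, demands new input), which identifies the joint law of $\bigl(\mathcal{L}(\zeta^0_t),\mathcal{R}(\zeta^0_t)\bigr)$ with the law of a single labelled coordinate under the step initial condition. This is precisely the computation deferred to Section~\ref{sec6}. Granting it, $\Pb(B_N(c))$ becomes a probability of the form $\Pb\bigl(x^{\mathrm{step}}_{k_N}(g(k_N,c))>N-2k_N-1-N^{1/10}\bigr)$, in which the deterministic shift and the $N^{1/10}=N^{\kappa'}$ window ($\kappa'=1/10<1/3$) are absorbed exactly as the $c'N^{\kappa},\,c''N^{\kappa'}$ terms are in Corollary~\ref{cor}; that corollary then yields the limit $F_{\GUE}(cf(\alpha))$.
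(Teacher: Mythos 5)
Your high-level plan coincides with the paper's (invert time via the Hecke-algebra involution, use Mallows elements, reduce to step initial data and Corollary \ref{cor}), but as a proof it has a hole exactly where the content of the theorem lies: the ``identity'' you invoke --- that the joint law of $\bigl(\mathcal{L}(\zeta^{0}_{t}),\mathcal{R}(\zeta^{0}_{t})\bigr)$ coincides with the law of a single labelled coordinate of the step ASEP --- is never established, and you explicitly defer it to ``the computation deferred to Section \ref{sec6}'', which is circular: Section \ref{sec6} \emph{is} the proof of Theorem \ref{alexey} that you are asked to supply. Moreover, no identity of quite that form is what the symmetry yields. What the involution $\mathfrak{i}$ of Proposition \ref{prop:CPsymmetry} produces (Propositions \ref{prop:DistrIdent1}--\ref{prop:DistId3}) is an identity for the \emph{Mallows-randomized} block configuration $\mathfrak{C}_{0}$ (the block $\mathbf{1}_{[-S-R;0]}$ with two segments brought into $Q$-equilibrium), not for the deterministic $\zeta^{0}$; and the other side of the identity is not a single step coordinate but a \emph{two-species} ASEP run from a step-like configuration for time $t$ and then multiplied by two Mallows elements. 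Turning that side into $F_{\GUE}(cf(\alpha))$ needs, besides Corollary \ref{cor}, the exponential concentration of the stationary measure (Proposition \ref{3.2}) to control where the $Q$-equilibration places the holes and the first-class particles; and transferring the conclusion from $\mathfrak{C}$ back to $\zeta^{0}$ needs the monotone comparison $\mathfrak{C}_{0}\preceq\widehat{\zeta}^{0,S}$, a truncation of $\Z$ to a finite box with $S\ge N^{N}$, and a separate soft upper bound. None of these steps appears in your proposal, so the core of the argument is missing.

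A second, smaller error: your claim that each of the two constraints in $B_{N}(c)$ ``separately has probability $\to F_{\GUE}(cf(\alpha))$ by Corollary \ref{cor}'' is not justified. The basic coupling gives only $\mathcal{L}(\zeta^{0}_{t})\le x^{\mathrm{step}(k_{N})}_{k_{N}}(t)$, hence only $\limsup_{N}\Pb\bigl(\mathcal{L}(\zeta^{0}_{t})>N-k_{N}-N^{1/10}\bigr)\le F_{\GUE}(cf(\alpha))$; the matching \emph{lower} bound for this marginal is precisely the hard direction (the wall of particles on $\Z_{>N}$ can slow the leftmost particle down), and it is exactly what the Hecke-algebra argument delivers. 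So your reduction ``marginals are known, the real content is the coincidence of $\chi_{L}$ and $\chi_{R}$'' misplaces the difficulty. In fact your ``current conservation'' heuristic is exact and makes the coincidence issue disappear: since $\zeta^{0}_{t}\in\Omega_{N+1-k_{N}}$ for all $t$, the number of particles at positions $\le N-k_{N}$ equals the number of holes at positions $>N-k_{N}$, whence $\{\mathcal{L}(\zeta^{0}_{t})>N-k_{N}\}\subseteq B_{N}(c)\subseteq\{\mathcal{L}(\zeta^{0}_{t})>N-k_{N}-N^{1/10}\}$, and the theorem is equivalent (up to the harmless $N^{1/10}\ll N^{1/3}$ windows) to the marginal statement for $\mathcal{L}$ alone. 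But that marginal lower bound still requires the full distribution-identity machinery that your proposal leaves as a black box.
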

We note that the $N^{1/10}$ term   in the definition of $B_{N}(c)$  is there merely for concreteness, any term that goes to $+\infty$ with $N$ and is $o(N^{1/3})$ would do.

To get an upper bound for $d^{N,k_N  }(t),$ we consider the hitting time
\begin{equation}\label{HHH}
\mathfrak{H}=\inf\{t\geq 0: \zeta^{0}_{t}=\zeta^{1}\}.
\end{equation}
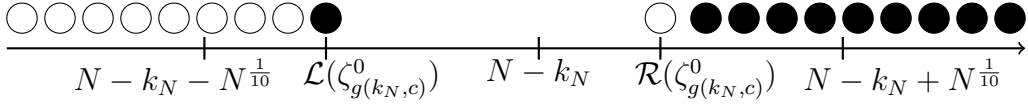
\begin{figure}\begin{center}
\begin{tikzpicture}[scale=2]

  \draw[thick, ->] (-1,0) -- (5.7,0);

    \draw (2.5,0) node[below] {$N-k_N$};
  \filldraw (1.1,0.2) circle (0.1);
      \draw (1.4,0) node[below] {$\mathcal{L}(\zeta^{0}_{g(k_{N},c)})$};

\draw (3.3,0.2) circle (0.1);
     \draw (3.6,0) node[below] {$\mathcal{R}(\zeta^{0}_{g(k_{N},c)})$};
   \draw (4.9,0) node[below] {$N-k_N+N^{\frac{1}{10}}$};
      \draw (0.1,0) node[below] {$N-k_N-N^{\frac{1}{10}}$};

       \foreach \x in {2.5,3.3,1.1,4.5,0.3}
       \draw[thick] (\x,0.075)--(\x,-0.075);
            \foreach \x in {3.6,3.85,4.1,4.35,4.6,4.85,5.1,5.35,5.6}
       \filldraw (\x,0.2) circle (0.1);
                 \foreach \x in {0.85,0.6,0.35,0.1,-0.15,-0.4,-0.65,-0.9}
       \draw (\x,0.2) circle (0.1);

\end{tikzpicture}\end{center}
\caption{The particle configuration $\zeta_{g(k_{N},c)}^{0}$ on the event $B_{N}(c)$ from Theorem \ref{alexey}: Black balls indicate the presence of a particle, white balls of a hole, blank space may be occupied by  holes or particles.  On $B_{N}(c),$ the leftmost particle $\mathcal{L}(\zeta^{0}_{g(k_{N},c)})$ and the rightmost hole $\mathcal{R}(\zeta^{0}_{g(k_{N},c)})$ of $\zeta^{0}_{g(k_{N},c)}$ lie in $[N-k_{N}-N^{1/10};N-k_{N}+N^{1/10}].$  If $B_{N}(c)$ happens, the hitting time $\mathfrak{H}$ from \eqref{HHH} cannot be much larger than $g(k_{N},c)$, see Proposition \ref{0}. As $\mathfrak{H}$
induces an upper bound for $d^{N,k_N}(g(k_{N},c))$ via Proposition \ref{hineq2}, this implies that $d^{N,k_N}(g(k_{N},c))$ is asymptotically bounded from above by $1-\Pb(B_{N}(c))$, which equals $1-F_{\GUE}(cf(\alpha))$ asymptotically.
 }
\label{Graph}
\end{figure}

The link between this hitting time and the maximal  total-variation distance is as follows.
\begin{prop}\label{hineq2}
We have
\begin{equation}\label{ineqcoupl}
d^{N,k_N  }(t)\leq \Pb(\mathfrak{H}> t).
\end{equation}
\end{prop}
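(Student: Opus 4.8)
The plan is to prove Proposition \ref{hineq2} via a coupling argument built on the basic coupling and the monotonicity of the partial order $\preceq$. First I would recall that under the basic coupling, all ASEPs on $[1;N]$ with $k_N$ particles are driven by the same Poisson clocks, and the order $\preceq$ from \eqref{order2} is preserved in time. The key structural fact is that $\xi^0$ and $\xi^1$ from \eqref{xi01} are, respectively, the minimal and maximal elements of $\Omega^{N,k_N}$ with respect to $\preceq$, so that under the basic coupling every initial configuration $\xi$ is sandwiched, $\xi^0 \preceq \xi \preceq \xi^1$, and this sandwiching persists: $\xi^0_t \preceq \xi_t \preceq \xi^1_t$ for all $t \ge 0$.

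The central idea is a standard monotone-coupling bound on total-variation distance via a coalescence (hitting) time. I would introduce the coupling time for the \emph{finite} chains, $\tau = \inf\{t \ge 0 : \xi^0_t = \xi^1_t\}$. Because the extremal chains are squeezed together, once $\xi^0_t = \xi^1_t$ holds, \emph{all} coupled copies $\xi_t$ started from arbitrary $\xi \in \Omega^{N,k_N}$ coincide by the sandwiching; in particular the chain started from $\xi$ has coalesced with the stationary chain (started from $\pi_{N,k_N}$, which also lies between $\xi^0$ and $\xi^1$). The coupling inequality then gives, for every fixed $\xi$,
\begin{equation*}
\|P_t^{\xi} - \pi_{N,k_N}\|_{\mathrm{TV}} \le \Pb(\xi^0_t \ne \xi^1_t) = \Pb(\tau > t),
\end{equation*}
and taking the maximum over $\xi$ yields $d^{N,k_N}(t) \le \Pb(\tau > t)$.

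It remains to identify $\Pb(\tau > t)$ with $\Pb(\mathfrak{H} > t)$, where $\mathfrak{H}$ is the hitting time \eqref{HHH} defined for the ASEPs $\zeta^0, \zeta^1$ on $\Z$ living in $\Omega_{N+1-k_N}$. Here I would use the basic coupling to relate the finite segment $[1;N]$ to the full line: the configurations $\zeta^0, \zeta^1$ from \eqref{zeta01} agree with $\xi^0, \xi^1$ on $[1;N]$ and are rigidly packed outside (all holes to the left, all particles to the right in the appropriate ranges), so the dynamics inside $[1;N]$ under the basic coupling on $\Z$ matches the segment dynamics, while the frozen configurations outside ensure that $\zeta^0_t = \zeta^1_t$ on all of $\Z$ exactly when $\xi^0_t = \xi^1_t$ on $[1;N]$. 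Thus $\mathfrak{H} = \tau$ under this coupling, giving \eqref{ineqcoupl}.

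I expect the main obstacle to be the bookkeeping in this last identification step: one must verify carefully that the extension from $[1;N]$ to $\Z$ via $\zeta^0, \zeta^1$ does not alter the coalescence time, i.e.\ that no discrepancy between the two chains can ever enter or leave the segment across the boundary in a way that decouples $\mathfrak{H}$ from $\tau$. This requires checking that $\zeta^0 \preceq \zeta^1$ in the sense of \eqref{order}, that the order is preserved on $\Z$, and that the only site where the two configurations can differ is exactly tracked by the leftmost particle and rightmost hole discrepancy; the geometry of $\zeta^0, \zeta^1$ (which differ only by the block $[k_N] \leftrightarrow [N-k_N]$ region) makes $\zeta^0_t = \zeta^1_t$ equivalent to the full agreement on the segment. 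Everything else is a routine application of the preservation of $\preceq$ and the coupling inequality.
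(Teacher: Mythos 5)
The first half of your argument is fine and is essentially the paper's own first step: the basic coupling, the sandwiching $\xi^0\preceq\xi\preceq\xi^1$ of every initial condition between the extremal configurations \eqref{xi01}, and the coupling bound $d^{N,k_N}(t)\le\Pb(\tau>t)$ with $\tau=\inf\{t:\xi^0_t=\xi^1_t\}$ (the paper works with the hitting time $\mathfrak{h}=\inf\{t:\xi^0_t=\xi^1\}$ of the maximal \emph{state}, which dominates your $\tau$, but the mechanism is the same).

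The gap is in your final identification $\Pb(\tau>t)=\Pb(\mathfrak{H}>t)$, and it is not a bookkeeping issue. First, $\mathfrak{H}$ in \eqref{HHH} is the hitting time of the \emph{fixed} configuration $\zeta^1$ by the single process $\zeta^0_t$, not a coalescence time of two chains on $\Z$, so your picture of "no discrepancy between the two chains crossing the boundary" does not match the object in the statement. Second, and more seriously, the claim that the configurations outside $[1;N]$ are frozen and that the dynamics of $\zeta^0_t$ inside $[1;N]$ match the segment dynamics is false whenever $q>0$: in $\zeta^0$ from \eqref{zeta01} the site $N$ is empty, so the blockade particle at $N+1$ jumps left \emph{into} the segment at rate $q$, and the particles started in $[1;k_N]$ can leave the segment by jumping left past site $1$ (site $0$ is empty). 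Hence the restriction of $\zeta^0_t$ to $[1;N]$ is \emph{not} the finite ASEP $\xi^0_t$, and indeed the equality $\mathfrak{H}=\tau$ fails: generically $\tau<\mathfrak{H}$, since the two finite chains can coalesce in any configuration and well before $\zeta^0_t$ reaches the single packed state $\zeta^1$. What is true, and what the proposition actually requires, is only the one-sided inequality $\tau\le\mathfrak{H}$ (the paper proves $\mathfrak{h}\le\mathfrak{H}$), and this inequality is precisely the nontrivial content of the proof: the paper establishes it by a particle-by-particle "no first discrepancy" argument showing that under the basic coupling $x^{\zeta^0}_i(t)\le x^{\xi^0}_i(t)$ for all $i\in\{1,\dots,k_N\}$ and all $t\ge0$ (checking case by case that any jump which would create a first discrepancy is blocked in both systems simultaneously), so that $\zeta^0_{\mathfrak{H}}=\zeta^1$ forces $x^{\xi^0}_i(\mathfrak{H})=N+1-i$, i.e.\ $\xi^0_{\mathfrak{H}}=\xi^1$. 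Your proposal contains no substitute for this comparison argument, so the passage from the segment to $\Z$ — the heart of the proposition — is missing.
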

\begin{proof}
Let $\xi, \xi^{\prime}\in \Omega^{N,k}$ and let the ASEPs $(\xi_{t},\xi^{\prime}_{t},t\geq 0)$ be coupled via the basic coupling.  We define the coalescence time
\begin{equation}
\tau^{\xi,\xi^{\prime}}=\inf\{t: \xi_{t}=\xi^{\prime}_{t}\}.
\end{equation}

Then we have the  general  inequality (see  \cite[Corollary 5.5]{LPW17})
\begin{equation}\label{5.5}
d^{N,k_{N}}(t)\leq \max_{\xi,\xi^{\prime}\in \Omega^{N,k_{N}}}\Pb(\tau^{\xi,\xi^{\prime}}>t).
\end{equation}
Define furthermore the hitting time
\begin{equation}
\mathfrak{h}=\inf\{t:\xi^{0}_{t}=\xi^{1}\}.
\end{equation}
Assuming all appearing ASEPs are coupled via the basic coupling, we prove the inequality
\begin{equation}\label{hineq}
\max_{\xi,\xi^{\prime}\in \Omega^{N,k_{N}}}\tau^{\xi,\xi^{\prime}}\leq \mathfrak{h}.
\end{equation}
For this,  recall the partial order \eqref{order2}.  We then have
\begin{equation}
\xi^{1}=\xi^{0}_{\mathfrak{h}}\preceq \xi_{\mathfrak{h}}\preceq \xi^{1}
\end{equation}
i.e. $\xi_{\mathfrak{h}}=\xi^{1}$ and likewise $\xi_{\mathfrak{h}}^{\prime}=\xi^{1}$ so that  $\xi_{\mathfrak{h}}=\xi_{\mathfrak{h}}^{\prime}$ and $\tau^{\xi,\xi^{\prime}}\leq \mathfrak{h}$.

To proceed, we use the basic  coupling of  ASEPs on $[1;N]$ with ASEPs on $\Z$ described at the end of Section \ref{coupling}. 

We  will show that
\begin{equation}\label{hH}
\mathfrak{h}\leq \mathfrak{H}.
\end{equation}
To see this, we show  the following random time is infinite: Let 
\begin{equation*}
\mathcal{T}=\inf\{t: \mathrm{\, there\, is\, an \,}i^{*}\in\{1,\ldots,k_{N}\}\mathrm{\,such \,that\,}  x^{\zeta^{0}}_{i^*}(t)>x^{\xi^{0}}_{i^*}(t)\}.
\end{equation*}
Note that $x^{\zeta^{0}}_{i}(0)=x^{\xi^{0}}_{i}(0),i=1,\ldots,k_{N}.$ Thus, to have an $i^{*}$  with $x^{\zeta^{0}}_{i^*}(t)>x^{\xi^{0}}_{i^*}(t),$ one of the Poisson processes $(\mathcal{P}(z),z\in[1;N])$ of the graphical construction must have made a jump during $[0,t].$ In particular, $\mathcal{T}>0$ almost surely, we can thus consider the left limit $\mathcal{T}^{-}.$  At time $\mathcal{T}$, there is exactly one $i^{*}$ with
$x^{\zeta^{0}}_{i^*}(\mathcal{T})>x^{\xi^{0}}_{i^*}(\mathcal{T}),$  having more than one  $i^{*}$ would require two jumps to happen at the same time. Furthermore, we always have
\begin{equation}
x^{\xi^{0}}_{i^*}(\mathcal{T}^{-})=x^{\zeta^{0}}_{i^*}(\mathcal{T}^{-}), \quad x^{\xi^{0}}_{i}(\mathcal{T}^{-})\geq x^{\zeta^{0}}_{i}(\mathcal{T}^{-}), i=1,\ldots,k_{N}
\end{equation}

We now distinguish two possibilities : The first possibility  to have $x^{\zeta^{0}}_{i^*}(\mathcal{T})>x^{\xi^{0}}_{i^*}(\mathcal{T})$ is that at time $\mathcal{T},$ $x^{\zeta^{0}}_{i^*}$ makes a jump to the right that $x^{\xi^{0}}_{i^*}$ does not make.  One way for $x^{\xi^{0}}_{i^*}$  not to make a jump to the right is if $x^{\xi^{0}}_{i^*}(\mathcal{T}^{-})=N.$  But then also $x^{\zeta^{0}}_{i^*}(\mathcal{T}^{-})=N,$ and $x^{\zeta^{0}}_{i^*}$ cannot jump either. The other way for
$x^{\xi^{0}}_{i^*}$  not to make a jump to the right is if it is blocked by the particle  $x^{\xi^{0}}_{i^* -1}$, i.e. if
\begin{equation*}
x^{\xi^{0}}_{i^* -1}(\mathcal{T}^{-})=x^{\xi^{0}}_{i^* }(\mathcal{T}^{-})+1.
\end{equation*}

 However, since \begin{equation*}x^{\xi^{0}}_{i^* -1}(\mathcal{T}^{-})\geq x^{\zeta^{0}}_{i^* -1}(\mathcal{T}^{-})> x^{\zeta^{0}}_{i^* }(\mathcal{T}^{-})=x^{\xi^{0}}_{i^* }(\mathcal{T}^{-}),\end{equation*}
 this implies that
 \begin{equation*}
x^{\zeta^{0}}_{i^* -1}(\mathcal{T}^{-})=x^{\zeta^{0}}_{i^* }(\mathcal{T}^{-})+1,
\end{equation*}

showing that $x^{\zeta^{0}}_{i^*}$  cannot jump to the right  at time $\mathcal{T}$ either.

The second possibility to have $x^{\zeta^{0}}_{i^*}(\mathcal{T})>x^{\xi^{0}}_{i^*}(\mathcal{T})$ is that at time $\mathcal{T},$ $x^{\xi^{0}}_{i^*}$ makes a jump to the left that $x^{\zeta^{0}}_{i^*}$ does not make. The only way for this to happen however is that $x^{\zeta^{0}}_{i^* +1}$  blocks the left jump of $x^{\zeta^{0}}_{i^*}$, i.e.

\begin{equation}
x^{\zeta^{0}}_{i^* +1}(\mathcal{T}^{-})=x^{\zeta^{0}}_{i^* }(\mathcal{T}^{-})-1.\end{equation}

But since $x^{\xi^{0}}_{i^* +1}(\mathcal{T}^{-})\geq x^{\zeta^{0}}_{i^* +1}(\mathcal{T}^{-}),$ this implies that
\begin{equation}x^{\xi^{0}}_{i^* +1}(\mathcal{T}^{-})=x^{\xi^{0}}_{i^* }(\mathcal{T}^{-})-1,
\end{equation}
and therefore $x^{\xi^{0}}_{i^*}$ cannot jump to the left at time $\mathcal{T}$ either.

In total, we thus have $\Pb(\mathcal{T}=\infty)=1$. Since clearly

\begin{equation}
x_{i}^{\zeta^{0}}(\mathfrak{H})=N+1-i,\quad i=1,\ldots,k_{N},
\end{equation}
we can thus conclude
\begin{equation}
x_{i}^{\xi^{0}}(\mathfrak{H})=N+1-i,\quad i=1,\ldots,k_{N},
\end{equation}
and therefore \eqref{hH} holds.  Combining \eqref{5.5}, \eqref{hineq} and \eqref{hH} finishes the proof.
\end{proof}

The key observation now is that if the event $B_{N}(c)$ from Theorem \ref{alexey} happens, the hitting time $\mathfrak{H}$ cannot be much larger than $g(k_{N},c)$.

In order to show this, we will use the following result from \cite{BBHM}, which in fact is much stronger than what we will be needing.
\begin{tthm}[Theorem 1.9 of \cite{BBHM}]\label{BBHMthm}For $M\in \Z_{\geq 1}$, let $I^M=\mathbf{1}_{[-M;-1]}+\mathbf{1}_{\Z_{\geq M}},$ and $(I^{M}_{t},t\geq 0)$ be the ASEP started from $I^{M}$.  Define the hitting time $\mathfrak{H}_{M}=\inf\{t\geq 0: I^{M}_{t}=\mathbf{1}_{\Z_{\geq 0}}\}$. Then for every $\delta>0$ there is a constant $D=D(p,\delta)$ such that 
\begin{equation}\label{26}
\Pb( \mathfrak{H}_{M} \geq DM)<\frac{\delta}{M}.
\end{equation}

\end{tthm}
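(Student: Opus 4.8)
The plan is to reduce this two-sided clearing problem to the motion of a single tagged particle and then apply a second-moment estimate, which is exactly what produces the $\delta/M$ form of the tail. First I would reformulate $\mathfrak{H}_M$. Since $I^M\in\Omega_0$, the dynamics preserve $\Omega_0$ together with the conserved count $\#\{\text{particles}<0\}=\#\{\text{holes}\ge 0\}$, and $\mathbf{1}_{\Z_{\geq 0}}$ is the $\preceq$-maximal element of $\Omega_0$, one has the identity
\begin{equation*}
\mathfrak{H}_M=\inf\{t\ge 0:\ \mathcal{L}(I^M_t)\ge 0\}.
\end{equation*}
Indeed, as soon as the leftmost particle reaches the origin there are no particles left of $0$, hence by conservation no holes right of $0$, i.e. $I^M_t=\mathbf{1}_{\Z_{\geq 0}}$. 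This leftmost particle keeps its identity (particles do not cross on $\Z$), starts at $-M$, and must make net displacement $+M$. Because $\mathcal{L}$ moves by $\pm1$, $\{\mathfrak{H}_M>DM\}\subseteq\{\mathcal{L}(I^M_{DM})<0\}$, so it suffices to control $\Pb(\mathcal{L}(I^M_{DM})<0)$.

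Next, using the basic coupling and attractivity, I would replace $I^M$ by the $\preceq$-smaller and simpler reference $\eta=\mathbf{1}_{[-M;-1]}$ obtained by deleting the right-hand block $\mathbf{1}_{\Z_{\geq M}}$. Since $\eta\preceq I^M$ one has $\mathcal{L}(\eta_t)\le\mathcal{L}(I^M_t)$ for all $t$, so with $\mathfrak{H}_M'=\inf\{t:\mathcal{L}(\eta_t)\ge 0\}\ge\mathfrak{H}_M$ it is enough to show that the last particle of a single draining block of length $M$ passes the origin by time $DM$ with the required probability. For this clean object I would establish two moment bounds, with constants depending only on $p$: $\E[\mathfrak{H}_M']\le CM$ (positive speed of the last particle) and $\mathrm{Var}(\mathfrak{H}_M')\le C'M$ (its displacement is a sum of $O(M)$ weakly-correlated increments, so it is diffusive). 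Chebyshev's inequality then gives
\begin{equation*}
\Pb(\mathfrak{H}_M\ge DM)\le\Pb(\mathfrak{H}_M'\ge DM)\le\frac{C'}{(D-C)^2 M}<\frac{\delta}{M}
\end{equation*}
for $D=D(p,\delta)$ large, the finitely many small values of $M$ being absorbed into $D$ via monotonicity of $\mathfrak{H}_M$ in $M$. (Alternatively, a large-deviation bound makes $\Pb(\mathcal{L}(I^M_{DM})<0)$ exponentially small, which is stronger than needed.)

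The main obstacle is the speed/variance input, i.e. showing that the last particle of the draining block advances linearly with diffusive fluctuations. This is genuinely delicate because (i) initially that particle is blocked on its right and free on its left, so it momentarily drifts the \emph{wrong} way at rate $q$, and (ii) the natural monovariant — the transport area between the height function of the current state and that of the target — is \emph{not} a supermartingale when $q$ is close to $p$, since the right block can emit particles leftward and thereby push holes further right, \emph{increasing} the area. The way around both difficulties is to bypass any pointwise drift estimate and instead feed in a law of large numbers with concentration for an edge particle of an exactly analyzable profile: by a space reflection combined with $p\leftrightarrow q$, the last particle of the block is mapped to the front particle of a biased block in vacuum, whose position obeys a sharp law of large numbers with $O(t^{1/3})$ fluctuations of the same nature as in Theorem \ref{ASEPthm} (and is also accessible by subadditivity). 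Transferring this back through the coupling yields the two moment bounds and closes the argument.
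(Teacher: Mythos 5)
First, a point of reference: the paper does not prove this statement at all --- it is imported verbatim as Theorem 1.9 of \cite{BBHM} --- so your proposal can only be measured against that external proof on its own merits. Your first step is correct: since $I^M\in\Omega_0$ and the (finite, equal) numbers of particles below $0$ and holes at or above $0$ are conserved by the dynamics, one indeed has $\mathfrak{H}_M=\inf\{t\ge 0:\mathcal{L}(I^M_t)\ge 0\}$ and $\{\mathfrak{H}_M>DM\}\subseteq\{\mathcal{L}(I^M_{DM})<0\}$.

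The comparison step, however, goes in the wrong direction, and this breaks the entire argument. Under the basic coupling, $\eta=\mathbf{1}_{[-M;-1]}\le I^M$ coordinatewise, and ASEP is attractive, so $\eta_t\le I^M_t$ coordinatewise for all $t$; hence the particle set of $\eta_t$ is a subset of that of $I^M_t$, which forces $\mathcal{L}(\eta_t)\ge\mathcal{L}(I^M_t)$ and therefore $\mathfrak{H}_M'\le\mathfrak{H}_M$ --- the opposite of what you assert. This matches the obvious physical intuition: the block $\mathbf{1}_{\Z_{\ge M}}$ can only obstruct the particles coming from the left (they jam against it), so deleting it accelerates the leftmost particle rather than slowing it down. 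Your appeal to $\preceq$ does not rescue the step: $\eta\preceq I^M$ holds only vacuously, because every tail sum $\sum_{j\ge r}\bigl(1-\eta(j)\bigr)$ is infinite, and the implication ``$\zeta'\preceq\zeta''$ implies $\mathcal{L}(\zeta'_t)\le\mathcal{L}(\zeta''_t)$'' used in the paper requires both configurations to lie in a common class $\Omega_Z$ (equal particle/hole excess), which $\eta$ violates. Consequently, even granting your moment bounds for the draining block, Chebyshev controls $\Pb(\mathfrak{H}_M'\ge DM)$, while the only available inequality is $\Pb(\mathfrak{H}_M\ge DM)\ge\Pb(\mathfrak{H}_M'\ge DM)$, so you obtain no upper bound on the quantity in \eqref{26}. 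The whole difficulty of the theorem --- and the content of the proof in \cite{BBHM} --- is precisely to control the slowdown caused by the traffic jam of the left particles against the right block, with failure probability $O(\delta/M)$; your reduction removes that interaction from the problem instead of handling it. (Your closing observation that the transport-area functional is not a supermartingale when $q$ is close to $p$ is correct and is exactly why the jam cannot be waved away; but the reflection-plus-$p\leftrightarrow q$ trick again analyzes only the free block $\eta$, not $I^M$.)
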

Now we can show that on $B_{N}(c)$, we have good control over $\mathfrak{H}$.
\begin{prop}\label{0} We have
\begin{equation}
 \lim_{N\to\infty}\Pb(\{\mathfrak{H}\geq g(k_{N},c)+N^{1/5}\}\cap B_{N}(c))=0.
\end{equation}
\end{prop}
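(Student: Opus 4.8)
Set $t_0 := g(k_N,c)$ and $M := \lceil N^{1/10}\rceil$. The plan is to show that on $B_N(c)$ the configuration $\zeta^0_{t_0}$ is already so close to the sorted configuration $\zeta^1$ that the residual time to hit $\zeta^1$ is, with high probability, of order $N^{1/10}=o(N^{1/5})$; this residual time will be controlled by Theorem \ref{BBHMthm}. First I would record the geometric content of $B_N(c)$: there $\zeta^0_{t_0}$ has no particle at any site $\leq N-k_N-N^{1/10}$ and no hole at any site $>N-k_N+N^{1/10}$, so $\zeta^0_{t_0}$ agrees with $\zeta^1$ at every site outside the window $[N-k_N+1-M;\,N-k_N+M]$, and inside that window it has exactly $M$ particles and $M$ holes (this count is forced, since both $\zeta^0_{t_0}$ and $\zeta^1$ lie in $\Omega_{N+1-k_N}$ and agree off the window). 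Introduce the shifted Benjamini--Berger--Hoffman--Mossel configuration
\[
J^M := \mathbf{1}_{[N-k_N+1-M;\,N-k_N]} + \mathbf{1}_{\Z_{\geq N-k_N+1+M}},
\]
obtained from $I^M$ of Theorem \ref{BBHMthm} by the translation sending $0$ to $N-k_N+1$; then $J^M\in\Omega_{N+1-k_N}$ and the sorted image of $J^M$ is exactly $\zeta^1$.

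The key \emph{deterministic} claim is that on $B_N(c)$ one has $J^M \preceq \zeta^0_{t_0}$ in the order \eqref{order}. Indeed, $J^M$ places all $M$ of its window-holes at the rightmost possible sites $[N-k_N+1;\,N-k_N+M]$, so among all configurations that agree with $\zeta^1$ off the window and have $M$ holes in it, $J^M$ maximizes $\sum_{j\geq r}(1-\,\cdot\,(j))$ for every $r\in\Z$; by \eqref{order} this is precisely $J^M\preceq\zeta^0_{t_0}$.

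Next I would transfer this to a bound on hitting times by monotonicity. Let $(\eta_s)_{s\geq0}$ be the ASEP started at time $t_0$ from $J^M$ and coupled to $\zeta^0$ through the basic coupling, i.e. driven by the same Poisson clocks on $[t_0,\infty)$. Since the order \eqref{order} is preserved under the basic coupling and $J^M\preceq\zeta^0_{t_0}$ on $B_N(c)$, we get $\eta_s\preceq\zeta^0_{t_0+s}$ for all $s\geq0$. Because $\zeta^1$ is the maximal element of $\Omega_{N+1-k_N}$ for $\preceq$, at the first time $\eta$ reaches $\zeta^1$ the domination forces $\zeta^0_{t_0+\cdot}$ to equal $\zeta^1$ as well; hence, writing $\mathfrak{H}_\eta:=\inf\{s\geq0:\eta_s=\zeta^1\}$, on $B_N(c)$ we have $\inf\{s\geq0:\zeta^0_{t_0+s}=\zeta^1\}\leq\mathfrak{H}_\eta$. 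On $\{\mathfrak{H}\geq t_0+N^{1/5}\}\cap B_N(c)$ the process has not hit $\zeta^1$ before $t_0$, so $\mathfrak{H}=t_0+\inf\{s\geq0:\zeta^0_{t_0+s}=\zeta^1\}$, and this event is therefore contained in $\{\mathfrak{H}_\eta\geq N^{1/5}\}$. By the Markov property and translation invariance, $\mathfrak{H}_\eta$ depends only on the clocks on $[t_0,\infty)$ and has the law of the hitting time $\mathfrak{H}_M$ of Theorem \ref{BBHMthm}, so
\[
\Pb\big(\{\mathfrak{H}\geq t_0+N^{1/5}\}\cap B_N(c)\big)\leq\Pb\big(\mathfrak{H}_M\geq N^{1/5}\big).
\]

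Finally I would invoke Theorem \ref{BBHMthm}: fix $\delta>0$ and let $D=D(p,\delta)$ be as there. Since $N^{1/5}/M\to\infty$, for all large $N$ we have $N^{1/5}\geq DM$, whence $\Pb(\mathfrak{H}_M\geq N^{1/5})\leq\Pb(\mathfrak{H}_M\geq DM)<\delta/M\to0$ as $N\to\infty$, which gives the claim. The main point requiring care is the deterministic order domination $J^M\preceq\zeta^0_{t_0}$ on $B_N(c)$ together with the clean restart at time $t_0$; once these are in place, the reduction to the linear-time sorting estimate of Theorem \ref{BBHMthm} is immediate, and the choice $M=\lceil N^{1/10}\rceil\ll N^{1/5}$ makes the residual sorting time negligible on the cutoff window scale.
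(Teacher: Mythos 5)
Your proof is correct and is essentially the paper's own argument: your $J^M$ is exactly the paper's auxiliary configuration $\eta$ restarted at time $g(k_N,c)$, the pathwise domination $J^M\preceq\zeta^0_{g(k_N,c)}$ on $B_N(c)$ combined with order preservation under the basic coupling bounds the residual hitting time by that of the translated BBHM configuration, and Theorem \ref{BBHMthm} then makes this residual time $o(N^{1/5})$ with high probability. The only difference is presentational: you spell out the minimality of $J^M$ among configurations compatible with $B_N(c)$, the forced particle/hole count in the window, and the sandwich $\eta_s\preceq\zeta^0_{t_0+s}\preceq\zeta^1$, steps the paper compresses into ``it is easy to see'' and the inclusion $B_N(c)\subseteq\{\mathfrak{H}^\eta\geq\mathfrak{H}\}$.
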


\begin{proof}We define with $\widetilde{N}:=N-k_N+N^{1/10}+1$ the configuration
\begin{equation}
\eta (j)= \mathbf{1}_{[N-k_N -N^{1/10}+1; N-k_N]}+\mathbf{1}_{\Z_{\geq \widetilde{N}}}.
\end{equation}

We start now \textit{at time} $g(k_{N},c)$ an ASEP from $\eta$ and couple this ASEP to all other appearing ASEPs via the basic coupling.
To make this clear in our notation, we write
\begin{equation}
\hat{\eta}_{g(k_{N},c)}:=\eta
\end{equation}
and denote $(\hat{\eta}_{\ell},\ell\geq g(k_{N},c))$ the ASEP which starts at time
$g(k_{N},c)$ from $\eta$, so that $\hat{\eta}_{t}$ for $t\geq g(k_{N},c)$ has the same law as $\eta_{t- g(k_{N},c)}.$ We define the corresponding hitting time
\begin{equation}
\mathfrak{H}^{\eta}=\inf\{t\geq g(k_{N},c): \hat{\eta}_{t}=\zeta^{1}\}.
\end{equation}
With $I^{N^{\frac{1}{10}}},\mathfrak{H}_{N^{\frac{1}{10}}}$ defined in Theorem \ref{BBHMthm}, we have $\eta(j+N-k_N+1)=I^{N^{\frac{1}{10}}}(j),j\in \Z,$ and thus in particular,
$\mathfrak{H}^{\eta}$ has the same law as $\mathfrak{H}_{N^{\frac{1}{10}}}+g(k_{N},c)$.
It is thus an immediate  corollary of \eqref{26} that for every $\varepsilon>0$ we have
\begin{equation}\label{22}
\lim_{N\to \infty}\Pb(\mathfrak{H}^{\eta}\geq g(k_{N},c)+N^{\frac{1}{10}+\varepsilon})=0.
\end{equation}

It is  easy to see  that we have the inclusion
\begin{equation}
B_{N}(c)\subseteq \{\zeta^{0}_{g(k_N,c)}\succeq\hat{\eta}_{g(k_{N},c)}\}.
\end{equation}
Since the partial order $\succeq$ is preserved under the basic coupling, we thus have
\begin{equation}
B_{N}(c)\subseteq \{\mathfrak{H}^{\eta}\geq \mathfrak{H}\}.
\end{equation}
Taking $\varepsilon >0$ such that $1/10+\varepsilon<1/5$, we can thus conclude from \eqref{22} that
\begin{equation}
\begin{aligned}
& \lim_{N\to\infty}\Pb(\{\mathfrak{H}\geq g(k_{N},c)+N^{1/5}\}\cap B_{N}(c))
\\&\leq  \lim_{N\to\infty}\Pb(\{\mathfrak{H}^{\eta}\geq g(k_{N},c)+N^{1/5}\})=0.
\end{aligned}
\end{equation}

\end{proof}

We can now prove Theorem \ref{upper}.

\begin{proof}[Proof of Theorem \ref{upper}]
Let $\varepsilon>0$ and let $N$ be sufficiently large so that \begin{equation*}g(k_N,c-\varepsilon)+N^{1/5}<g(k_{N},c).\end{equation*} Then, using \eqref{ineqcoupl}, we get
\begin{align*}
d^{N,k_N}\left(g(k_{N},c)\right) &\leq\Pb(\{\mathfrak{H}\geq g(k_N,c-\varepsilon)+N^{1/5}\}\cap B_{N}(c-\varepsilon))\\&+1-\Pb(B_{N}(c-\varepsilon)).\end{align*}
Combining Theorem \ref{alexey} with Proposition \ref{0} we get
\begin{align*}
\limsup_{N\to \infty}d^{N,k_N}\left(g(k_{N},c)\right) \leq \lim_{N\to \infty}1-\Pb(B_{N}(c-\varepsilon))=1-F_{\GUE}((c-\varepsilon)f(\alpha)).\end{align*}
Since $\varepsilon $ is arbitrary, Theorem \ref{upper} follows.
\end{proof}

\section{Lower bound}\label{lowersec}
In this section we prove  that the upper bound obtained in the previous section is also a lower bound. This is the content of the following Theorem.
\begin{tthm}\label{lower}Let $k=k_{N}$ with $k_N /N \to \alpha \in (0,1)$. Then
we have for $c\in \R$
\begin{equation}
\liminf_{N\to \infty}d^{N,k_{N}}\left(g(k_{N},c)\right) \geq 1-F_{\GUE}(cf(\alpha)),
\end{equation}
where $f(\alpha)=\frac{(\alpha(1-\alpha))^{1/6}}{(\sqrt{\alpha}+\sqrt{1-\alpha})^{4/3}}.$

\end{tthm}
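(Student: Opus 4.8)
The plan is to prove the lower bound with a single \emph{increasing} distinguishing event, exploiting the attractivity of ASEP. Since $\xi^{0}$ from \eqref{xi01} is the minimal element of $\Omega^{N,k_{N}}$ for the order \eqref{order2} and the basic coupling preserves this order, coupling one copy started from $\xi^{0}$ with one copy started from the stationary measure $\pi_{N,k_{N}}$ shows that $\xi^{0}_{t}$ is stochastically dominated by $\pi_{N,k_{N}}$ for every $t$. Hence, for any event $A$ that is increasing with respect to $\preceq$ we have $P^{\xi^{0}}_{t}(A)\le \pi_{N,k_{N}}(A)$, and therefore
\begin{equation*}
d^{N,k_{N}}(t)\ge \|P^{\xi^{0}}_{t}-\pi_{N,k_{N}}\|_{\mathrm{TV}}\ge \pi_{N,k_{N}}(A)-P^{\xi^{0}}_{t}(A).
\end{equation*}
I would take $A=A_{N}:=\{\mathcal{L}\ge N-k_{N}-N^{1/10}\}$, which is increasing because $\mathcal{L}$ is nondecreasing in $\preceq$ (a larger configuration has all its particles in every suffix $[r;N]$, hence a left-most particle at least as far to the right). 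It then suffices to prove, at $t=g(k_{N},c)$, that $\pi_{N,k_{N}}(A_{N})\to 1$ and $P^{\xi^{0}}_{g(k_{N},c)}(A_{N})\to F_{\GUE}(cf(\alpha))$.

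For the stationary estimate I would use the Mallows description of $\pi_{N,k_{N}}$ from Section \ref{secinv}. Because $Q=q/p\in[0,1)$, the Mallows measure favours configurations with many inversions, i.e.\ with particles pushed to the right; the left-most particle concentrates at $N-k_{N}+1$, with only $O(Q^{m})$ probability of lying $m$ sites further to the left. Consequently $\pi_{N,k_{N}}(\mathcal{L}<N-k_{N}-N^{1/10})\to 0$, so $\pi_{N,k_{N}}(A_{N})\to 1$. This step is routine.

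The dynamic estimate splits into two matching bounds. For the lower bound on $P^{\xi^{0}}_{g}(A_{N})$ I would invoke the monotone comparison established inside the proof of Proposition \ref{hineq2}: under the basic coupling the segment particles dominate the full-line particles, $x^{\xi^{0}}_{i}(t)\ge x^{\zeta^{0}}_{i}(t)$, and in particular $\mathcal{L}(\xi^{0}_{t})=x^{\xi^{0}}_{k_{N}}(t)\ge x^{\zeta^{0}}_{k_{N}}(t)=\mathcal{L}(\zeta^{0}_{t})$. On the event $B_{N}(c)$ of Theorem \ref{alexey} one has $\mathcal{L}(\zeta^{0}_{g})>N-k_{N}-N^{1/10}$, so $B_{N}(c)$ is contained in the event $A_{N}$ for the $\xi^{0}$-process, whence $P^{\xi^{0}}_{g}(A_{N})\ge \Pb(B_{N}(c))\to F_{\GUE}(cf(\alpha))$.

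The main work, and the step I expect to be the obstacle, is the matching upper bound $P^{\xi^{0}}_{g}(A_{N})\le F_{\GUE}(cf(\alpha))+o(1)$: one must show that the segment's left-most particle does not \emph{overshoot} the full-line prediction. Writing $G_{t}:=\mathcal{L}(\xi^{0}_{t})-\mathcal{L}(\zeta^{0}_{t})\ge 0$ for the nonnegative discrepancy created by the two boundaries of the segment relative to $\zeta^{0}$, the goal is $G_{g}=o(N^{1/3})$ with probability tending to $1$. The point is that at time $g(k_{N},c)$ the left-most particle sits at distance of order $N$ from both endpoints $1$ and $N$, so the boundary-induced discrepancies — produced only when the left-most particle is near $1$, or when a jam backs up from the right boundary $N$ — are carried by second-class particles travelling at finite speed and cannot accumulate to order $N^{1/3}$; the hitting-time control of Theorem \ref{BBHMthm} and Proposition \ref{0} can be used to preclude the jam from reaching the bulk before time $g$. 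Granting $G_{g}=o(N^{1/3})$, one gets $\mathcal{L}(\xi^{0}_{g})\le \mathcal{L}(\zeta^{0}_{g})+o(N^{1/3})$, and since any shift of size $o(N^{1/3})$ is absorbed by the $N^{1/3}$ Tracy–Widom window — exactly the phenomenon recorded in Corollary \ref{cor} and the remark following Theorem \ref{alexey} — the left-most-particle marginal already contained in (the proof of) Theorem \ref{alexey} gives $\Pb(\mathcal{L}(\zeta^{0}_{g})>N-k_{N}-N^{1/10}-o(N^{1/3}))\to F_{\GUE}(cf(\alpha))$. Combining the two bounds yields $P^{\xi^{0}}_{g}(A_{N})\to F_{\GUE}(cf(\alpha))$, and the displayed inequality gives $\liminf_{N}d^{N,k_{N}}(g(k_{N},c))\ge 1-F_{\GUE}(cf(\alpha))$. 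Everything except the quantitative discrepancy bound $G_{g}=o(N^{1/3})$ is either monotonicity or a direct appeal to Theorem \ref{alexey}; that bound is the delicate part.
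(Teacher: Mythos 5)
Your outline (distinguish via the position of the leftmost particle, bound total variation below by $\pi_{N,k_N}(A)-P^{\xi^0}_t(A)$, and kill the stationary term) matches the paper's, and both of those peripheral steps are sound: your event $A_N$ is the complement of the paper's event $A_N(l)$, and the stationary estimate is the paper's Proposition \ref{3.2}. The problem is the remaining step, which you correctly flag as the crux and then do not close. Your comparison process $\zeta^0$ can only bound $\mathcal{L}(\xi^0_t)$ from \emph{below} (that is what the $\mathcal{T}$-argument in the proof of Proposition \ref{hineq2} gives), so it yields $P^{\xi^0}_g(A_N)\ge \Pb(B_N(c))$ --- the useless direction for a total-variation \emph{lower} bound. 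What you need is $P^{\xi^0}_g(A_N)\le F_{\GUE}(cf(\alpha))+o(1)$, and your proposed route to it, the discrepancy bound $G_g=\mathcal{L}(\xi^0_g)-\mathcal{L}(\zeta^0_g)=o(N^{1/3})$, is asserted rather than proved. The tools you cite do not deliver it: Proposition \ref{0} controls the hitting time of $\zeta^1$ \emph{after} time $g(k_N,c)$ on the event $B_N(c)$, and says nothing about how the segment and full-line dynamics diverge before time $g$; and the guiding heuristic (``preclude the jam from reaching the bulk before time $g$'') is off the mark, since at time $g(k_N,c)$ the high-density region has spread all the way to $N-k_N$ --- that is precisely what makes $g$ the mixing time.

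The missing idea is to compare with a \emph{different} infinite system, and it makes the whole difficulty disappear. The paper's Proposition \ref{3.1} couples $\xi^0$ with the ASEP on $\Z$ started from the shifted step initial data $x^{\mathrm{step}(k_N)}_n(0)=k_N+1-n$, $n\ge 1$ (particles filling all of $(-\infty;k_N]$, no wall on the right), and shows by the same type of $\mathcal{T}$-argument you invoke --- but run so as to give domination in the opposite direction --- that the segment particles never overtake the step particles: $\mathcal{L}(\xi^0_t)=x^{\xi^0}_{k_N}(t)\le x^{\mathrm{step}(k_N)}_{k_N}(t)$ for all $t$. Corollary \ref{cor} then gives $\Pb\bigl(x^{\mathrm{step}(k_N)}_{k_N}(g(k_N,c))<N-k_N-N^{1/4}\bigr)\to 1-F_{\GUE}(cf(\alpha))$, which is exactly the upper bound on $P^{\xi^0}_g(A_N)$ you are missing (equivalently, the lower bound on the probability of the complementary event that the paper uses). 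Note the structural payoff: with this comparison, Theorem \ref{lower} needs only Propositions \ref{3.1} and \ref{3.2} and Corollary \ref{cor}; Theorem \ref{alexey}, on which your argument leans twice, is not needed for the lower bound at all --- in the paper it is reserved for the upper bound, Theorem \ref{upper}.
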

The main tool to prove Theorem \ref{lower} is to compare the finite ASEP $(\xi_{t}^{0},t\geq 0)$ (defined in  \eqref{xi01}) with the infinite ASEP started from the step initial data from Section \ref{stepsec}.
We will  consider the shifted step initial data on $\Z$ given by
\begin{equation}\label{shift}
x_{n}^{\mathrm{step}(k_{N})}(0)=k_{N}+1-n, n\geq 1.
\end{equation}
We start by noting that under the basic coupling,  the leftmost particle of $\xi^{0}_{t}$ is never to the right of $ x_{k_{N}}^{\mathrm{step}(k_{N})}(t)$.
\begin{prop}\label{3.1}
Consider the basic coupling of the finite ASEP $(\xi^{0}_{t},t\geq 0)$ and the infinite
ASEP started from the shifted step initial data \eqref{shift}. Then we have
\begin{equation}
x_{k_{N}}^{\xi^{0}}(t)\leq x_{k_{N}}^{\mathrm{step}(k_{N})}(t), \quad t \geq 0.
\end{equation}

\end{prop}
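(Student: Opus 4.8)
The plan is to establish, by a coupling argument in the spirit of the proof of Proposition \ref{hineq2}, the stronger componentwise domination
\begin{equation*}
x_i^{\xi^0}(t)\leq x_i^{\mathrm{step}(k_N)}(t),\qquad i=1,\ldots,k_N,\ t\geq 0,
\end{equation*}
whose case $i=k_N$ is exactly the assertion. At $t=0$ both sides agree, since $x_i^{\xi^0}(0)=x_i^{\mathrm{step}(k_N)}(0)=k_N+1-i$ for $i\le k_N$; the shifted step data carries in addition the particles at sites $0,-1,-2,\ldots$ to the left of the segment. Because both ASEPs are driven by the same Poisson clocks and coins, and because the finite ASEP simply ignores the rings on the bonds $(0,1)$ and $(N,N+1)$, the labels are preserved in both systems and $x_i$ always refers to the same particle. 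Following the structure of Proposition \ref{hineq2}, I would introduce the first violation time
\begin{equation*}
\mathcal{T}=\inf\{t\geq 0:\ x_i^{\xi^0}(t)>x_i^{\mathrm{step}(k_N)}(t)\text{ for some }i\in\{1,\ldots,k_N\}\}
\end{equation*}
and show $\mathcal{T}=\infty$ almost surely. Since a single clock ring moves at most one labelled particle per system and the configurations agree at $t=0$, one has $\mathcal{T}>0$, and at time $\mathcal{T}$ exactly one label $i^*$ passes from equality $x_{i^*}^{\xi^0}(\mathcal{T}^-)=x_{i^*}^{\mathrm{step}(k_N)}(\mathcal{T}^-)=:y$ to $x_{i^*}^{\xi^0}(\mathcal{T})=y+1>y=x_{i^*}^{\mathrm{step}(k_N)}(\mathcal{T})$.

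Next I would rule out the two mechanisms that could create this gap. \emph{Mechanism (i):} the ring at the bond $(y,y+1)$ moves $x_{i^*}^{\xi^0}$ to the right but not $x_{i^*}^{\mathrm{step}(k_N)}$; this requires site $y+1$ to be empty in $\xi^0$ but occupied in the step system, i.e.\ $x_{i^*-1}^{\mathrm{step}(k_N)}(\mathcal{T}^-)=y+1$. Combined with the induction hypothesis $x_{i^*-1}^{\xi^0}(\mathcal{T}^-)\leq x_{i^*-1}^{\mathrm{step}(k_N)}(\mathcal{T}^-)=y+1$ (valid up to $\mathcal{T}^-$) and the strict ordering $x_{i^*-1}^{\xi^0}(\mathcal{T}^-)>x_{i^*}^{\xi^0}(\mathcal{T}^-)=y$, this forces $x_{i^*-1}^{\xi^0}(\mathcal{T}^-)=y+1$, so $y+1$ is occupied in $\xi^0$ as well — a contradiction; for $i^*=1$ the rightmost step particle always has a hole to its right, so it moves together with $x_1^{\xi^0}$. \emph{Mechanism (ii):} the ring at the bond $(y-1,y)$ moves $x_{i^*}^{\mathrm{step}(k_N)}$ to the left but not $x_{i^*}^{\xi^0}$; this requires site $y-1$ empty in the step system, while $x_{i^*}^{\xi^0}$ fails to move left either because site $y-1$ is occupied in $\xi^0$ or because $y=1$ (the left wall). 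The first possibility forces $i^*\le k_N-1$ and is excluded exactly as in (i), now using the induction hypothesis and the ordering for the label $i^*+1$.

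The one genuinely new point — and the main obstacle — is the wall case $y=1$ of Mechanism (ii). Since the particles of $\xi^0$ live in $[1;N]$, having $x_{i^*}^{\xi^0}(\mathcal{T}^-)=1$ forces $i^*=k_N$ (only the leftmost particle can occupy site $1$), and that particle is blocked by the wall. To finish, I must show that $x_{k_N}^{\mathrm{step}(k_N)}$ is simultaneously blocked, i.e.\ that site $0$ is \emph{occupied} whenever $x_{k_N}^{\mathrm{step}(k_N)}=1$. This follows from a conservation identity across the bond $(0,1)$: if $R(t),L(t)$ denote the numbers of rightward and leftward particle crossings of $(0,1)$ up to time $t$, then the number of particles at sites $\geq 1$ equals $k_N+R(t)-L(t)$ and the number of holes at sites $\leq 0$ equals $R(t)-L(t)$ (both counts start from $0$). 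Now $x_{k_N}^{\mathrm{step}(k_N)}(\mathcal{T}^-)=1$ means that labels $1,\ldots,k_N$ occupy sites $\geq 1$ while labels $>k_N$ occupy sites $\leq 0$, so there are exactly $k_N$ particles at sites $\geq 1$; hence $R(\mathcal{T}^-)-L(\mathcal{T}^-)=0$, there are no holes at sites $\leq 0$, and in particular site $0$ is occupied. Thus the step particle cannot jump left either, contradicting Mechanism (ii). As both mechanisms are impossible, $\mathcal{T}=\infty$, which yields the componentwise domination and in particular the proposition.
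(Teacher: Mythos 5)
Your proof is correct and follows essentially the same route as the paper's: the same first-violation time $\mathcal{T}$, the same two jump mechanisms (a right jump of the finite particle the step particle does not make, or a left jump of the step particle the finite particle does not make), each excluded via the pre-violation ordering of the neighbouring labels $i^*-1$ and $i^*+1$. The only difference is at the wall case $y=1$: the paper simply asserts that $x_{k_N}^{\mathrm{step}(k_N)}$ can never jump from $1$ to $0$, whereas you justify this with an explicit conservation identity for crossings of the bond $(0,1)$ — a worthwhile filling-in of a detail the paper leaves implicit.
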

\begin{proof}
This is quite similar to the proof of the inequality \eqref{hH}, we will thus not repeat all the details. We show $\Pb(\widehat{\mathcal{T}}=\infty)=1,$ where  $\widehat{\mathcal{T}}$ is the random time
\begin{equation*}
\widehat{\mathcal{T}}=\inf\{t: \mathrm{\, there\, is\, an \,}i^{*}\in\{1,\ldots,k_{N}\}\mathrm{\,such \,that\,}  x^{\mathrm{step}(k_{N})}_{i^*}(t)<x^{\xi^{0}}_{i^*}(t)\}.
\end{equation*}

We again  distinguish two possibilities : The first possibility  to have $x^{\mathrm{step}(k_{N})}_{i^*}(\widehat{\mathcal{T}})<x^{\xi^{0}}_{i^*}(\widehat{\mathcal{T}})$ is that at time $\widehat{\mathcal{T}},$ $x^{\xi^{0}}_{i^*}$ makes a jump to the right that $x^{\mathrm{step}(k_{N})}_{i^*}$ does not make.  This implies that
\begin{equation}
x^{\mathrm{step}(k_{N})}_{i^* -1}(\widehat{\mathcal{T}}^{-})=x^{\mathrm{step}(k_{N})}_{i^* }(\widehat{\mathcal{T}}^{-})+1,
\end{equation}
which however implies that
\begin{equation}
x^{\xi^{0}}_{i^* -1}(\widehat{\mathcal{T}}^{-})=x^{\xi^{0}}_{i^* }(\widehat{\mathcal{T}}^{-})+1
\end{equation}
also, meaning $x^{\xi^{0}}_{i^* }$ cannot jump to the right at time
$\widehat{\mathcal{T}}$ either.

The other possibility is that at time $\widehat{\mathcal{T}}$, $x^{\mathrm{step}(k_{N})}_{i^*}$ makes a jump to the left that $x^{\xi^{0}}_{i^*}$ does not make. For $x^{\xi^{0}}_{i^*}$ not to make a jump to the left, we must have either
$x^{\xi^{0}}_{i^*}(\widehat{\mathcal{T}}^{-})=1,$  or we have
\begin{equation}\label{40}
x^{\xi^{0}}_{i^* +1}(\widehat{\mathcal{T}}^{-})=x^{\xi^{0}}_{i^* }(\widehat{\mathcal{T}}^{-})-1.
\end{equation}

If $x^{\xi^{0}}_{i^*}(\widehat{\mathcal{T}}^{-})=1,$ we have $i^{*}=k_{N},$ and $x^{\mathrm{step}(k_{N})}_{k_{N}}(\widehat{\mathcal{T}}^{-})=1,$ so that $x^{\mathrm{step}(k_{N})}_{k_{N}}$ also  cannot jump to the left at time $\widehat{\mathcal{T}},$ since $x^{\mathrm{step}(k_{N})}_{k_{N}}$ can never jump from $1$ to $0.$
If instead \eqref{40} holds,  we also have
\begin{equation}\label{41}
x^{\mathrm{step}(k_{N})}_{i^* +1}(\widehat{\mathcal{T}}^{-})=x^{\mathrm{step}(k_{N})}_{i^* }(\widehat{\mathcal{T}}^{-})-1,
\end{equation}
implying that $x^{\mathrm{step}(k_{N})}_{i^* }$ cannot jump to the left at time $\widehat{\mathcal{T}}$ either.

\end{proof}
Define for $l\in [1;N-k-1]$ the event
\begin{equation}
A_{N}(l)=\left\{\xi\in \Omega^{N,k}:\sum_{i=N-k-l}^{N}\xi(i)\leq k-1\right\},
\end{equation}
so that
\begin{equation*}
\xi_{t} \in A_{N}(l) \iff \mathcal{L}(\xi_t) <N-k-l.
\end{equation*}
Recall that $\pi_{N,k}$ is the stationary measure of ASEP in $\Omega^{N,k}$. The next proposition shows in particular   that $\pi_{N,k}$ gives vanishing mass to the event $A_{N}(l) $ when $l=l(N)$ goes to $+\infty$ with $N$.

\begin{prop}\label{3.2}
There are  constants $C_1,C_2>0$ which depend on $p$ but not on $N,k_N$  such that we have  $\pi_{N,k_{N}}(A_{N}(l))\leq C_{1}e^{-C_{2}l}.$
\end{prop}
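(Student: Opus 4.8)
The plan is to compute the stationary measure $\pi_{N,k}$ explicitly (here $k=k_N$, matching the definition of $A_N(l)$) and reduce the claim to an elementary geometric estimate. First I would record that, with $Q=q/p$ as in Section~\ref{secinv} and $S(\xi):=\sum_{i:\,\xi(i)=1}i$,
\begin{equation*}
\pi_{N,k}(\xi)=\frac{1}{\mathcal Z}\,Q^{-S(\xi)},\qquad \mathcal Z:=\sum_{\eta\in\Omega^{N,k}}Q^{-S(\eta)}.
\end{equation*}
The case $Q=0$ is trivial (then $\pi_{N,k}=\delta_{\xi^1}$ and $A_N(l)$ is a null event for $l\ge1$), so assume $Q\in(0,1)$. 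This form is checked in one line from detailed balance: the only transitions move a single particle between adjacent sites $i,i+1$, with forward (rightward) rate $p$ and backward (leftward) rate $q$, and $Q^{-S(\cdot)}$ is multiplied by exactly $Q^{-1}=p/q$ under such a move; hence $p\,Q^{-S(\xi)}=q\,Q^{-S(\xi')}$, and by irreducibility and uniqueness of the invariant measure this is $\pi_{N,k}$ (consistent with the Mallows description of Section~\ref{secinv}).

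Next I would pass to gap coordinates. Writing the particle positions as $1\le p_1<\dots<p_k\le N$, set $g_0=p_1-1$, $g_i=p_{i+1}-p_i-1$ for $1\le i\le k-1$, and $g_k=N-p_k$; these count the holes in the successive blocks, so $g_i\ge0$ and $\sum_{i=0}^{k}g_i=N-k$, while $\mathcal L(\xi)=g_0+1$. A direct computation gives $S(\xi)=\tfrac{k(k+1)}2+\sum_{i=0}^{k-1}(k-i)g_i$, so that $A_N(l)=\{\mathcal L(\xi)<N-k-l\}=\{g_0\le N-k-l-2\}=\{g_1+\dots+g_k\ge l+2\}$. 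The crucial algebraic simplification, which I would verify by a short rearrangement using $\sum_{i=0}^kg_i=N-k$, is that comparing with the packed configuration $\xi^1$ (for which $g_0=N-k$ and $s:=g_1+\dots+g_k=0$) the exponent collapses:
\begin{equation*}
S(\xi^1)-S(\xi)=\sum_{i=1}^{k} i\,g_i,\qquad\text{hence}\qquad \frac{\pi_{N,k}(\xi)}{\pi_{N,k}(\xi^1)}=Q^{\sum_{i=1}^{k} i\,g_i}=\prod_{i=1}^{k}\bigl(Q^{\,i}\bigr)^{g_i}.
\end{equation*}

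Finally I would estimate. Bounding the denominator below by the single term $\xi^1$ gives $\pi_{N,k}(\xi^1)\le1$, so dropping the upper constraint $s\le N-k$ leaves the factorized sum
\begin{equation*}
\pi_{N,k}(A_N(l))\le\sum_{\substack{g_1,\dots,g_k\ge0\\ g_1+\dots+g_k\ge l+2}}\ \prod_{i=1}^{k}\bigl(Q^{\,i}\bigr)^{g_i}.
\end{equation*}
On the summation region $\sum_i i\,g_i\ge\sum_i g_i\ge l+2$, whence $\sum_i i\,g_i\ge\tfrac12(l+2)+\tfrac12\sum_i i\,g_i$; substituting $Q^{\sum_i i g_i}\le Q^{(l+2)/2}\prod_i(Q^{\,i/2})^{g_i}$ and summing each geometric series yields
\begin{equation*}
\pi_{N,k}(A_N(l))\le Q^{(l+2)/2}\prod_{i=1}^{k}\frac1{1-Q^{\,i/2}}\le C_1\,Q^{\,l/2},\qquad C_1:=\prod_{i=1}^{\infty}\frac1{1-Q^{\,i/2}}<\infty,
\end{equation*}
which is the asserted bound with $C_2=\tfrac12\log(1/Q)>0$. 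I do not expect a genuine obstacle here; the one point requiring care is that $C_1,C_2$ must be uniform in $N$ and $k$, and this is exactly what the factorization over gaps and the convergence of $\prod_{i\ge1}(1-Q^{\,i/2})^{-1}$ (valid since $Q^{1/2}<1$) guarantee, $Q=q/p=(1-p)/p$ depending only on $p$.
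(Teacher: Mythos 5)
Your proof is correct, but it takes a genuinely different route from the paper's. The paper argues dynamically: it couples the finite ASEP started from the maximal configuration $\xi^{1}$ with the infinite ASEP started from $\zeta^{1}$ via the basic coupling, shows by the same blocking argument used for \eqref{hH} and Proposition \ref{3.1} that $\mathcal{L}(\xi^{1}_{t})\geq\mathcal{L}(\zeta^{1}_{t})$ for all $t$, identifies $\pi_{N,k_{N}}(A_{N}(l))$ with $\lim_{t\to\infty}\Pb(\mathcal{L}(\xi^{1}_{t})<N-k_{N}-l)$, and then quotes the exponential tail bound for the infinite-volume process from \cite[Proposition 3.1]{N20CMP}. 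You instead work statically with the stationary measure itself: the detailed-balance identification $\pi_{N,k}\propto Q^{-S(\xi)}$ (consistent with the projected Mallows measure of Section \ref{secinv}), the passage to gap coordinates, the identity $S(\xi^{1})-S(\xi)=\sum_{i}i\,g_{i}$, and a factorized geometric estimate; all of these steps check out, including the degenerate case $Q=0$ and the uniformity in $N,k_{N}$ of $C_{1}=\prod_{i\geq 1}\bigl(1-Q^{i/2}\bigr)^{-1}$ and $C_{2}=\tfrac12\log(1/Q)$. What your approach buys: it is completely self-contained (no coupling, no external citation, no representation of $\pi_{N,k_{N}}$ as a $t\to\infty$ limit) and it yields explicit constants. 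What the paper's approach buys: it recycles the coupling machinery already developed for \eqref{hH} and Proposition \ref{3.1} together with an off-the-shelf estimate, so in context it is very short and requires no manipulation of the Mallows weights, and it stays within the comparison-with-$\Z$ philosophy that organizes the rest of the paper.
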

\begin{proof}
We consider the basic coupling and  define the random time
\begin{equation*}
\widetilde{\mathcal{T}}=\inf\{t: \mathrm{\, there\, is\, an \,}i^{*}\in\{1,\ldots,k_{N}\}\mathrm{\,such \,that\,}  x^{\xi^{1}}_{i^*}(t)< x^{\zeta^{1}}_{i^*}(t)\}.
\end{equation*}
With a  proof that is very similar to the proof of \eqref{hH} and Proposition \ref{3.1}, we show that  $\Pb(\widetilde{\mathcal{T}}=\infty)=1.$ 
Thus in particular  $\mathcal{L}(\xi^{1}_t)\geq \mathcal{L}(\zeta^{1}_t)$ holds for all $t\geq 0,$  and hence
\begin{align*}
\pi_{N,k_{N}}(A_{N}(l))&=\lim_{t\to\infty}\Pb(\mathcal{L}(\xi^{1}_t)<N-k_{N}-l)\\&\leq \lim_{t\to\infty}\Pb( \mathcal{L}(\zeta^{1}_t)<N-k_{N}-l).
\end{align*}
Finally, we apply \cite[Proposition 3.1]{N20CMP} which shows that
\begin{align*}
 \lim_{t\to\infty}\Pb( \mathcal{L}(\zeta^{1}_t)<N-k_{N}-l)\leq C_{1}e^{-C_{2}l},
\end{align*}
finishing the proof.
\end{proof}
Now we can prove Theorem \ref{lower}.
\begin{proof}[Proof of Theorem \ref{lower}]
Recall $P_{t}^{\xi}$ is the law of the ASEP started from $\xi$ at time $t$. We can by definition   bound
\begin{equation}
d^{N,k_{N}}(g(k_{N},c))\geq P_{g(k_{N},c)}^{\xi^{0}}(A_{N}(N^{1/4}))-\pi_{N,k_{N}}(A_N (N^{1/4})).
\end{equation}
By Proposition \ref{3.2}, $\lim_{N\to \infty}\pi_{N,k_{N}}(A_N (N^{1/4}))=0.$
Combining Proposition \ref{3.1} with Corollary \ref{cor} yields
\begin{equation}
\begin{aligned}
\liminf_{N\to\infty} P_{g(k_{N},c)}^{\xi^{0}}(A_{N}(N^{1/4}))&=\liminf_{N\to\infty}\Pb( x_{k_{N}}^{\xi^{0}}(g(k_{N},c))<N-k_{N}-N^{1/4})
\\&\geq\lim_{N\to\infty}\Pb( x_{k_{N}}^{\mathrm{step}(k_{N})}(g(k_{N},c))<N-k_{N}-N^{1/4})
\\&=1-F_{\mathrm{GUE}}(cf(\alpha)),
\end{aligned}
\end{equation}
finishing the proof.
\end{proof}

\section{Proof of Theorem \ref{alexey}}\label{sec6}
In this section we prove Theorem \ref{alexey} via a certain distribution identity coming from viewing the multi-species ASEP as a random walk on a Hecke algebra. We start by introducing the Hecke algebra and other necessary notions in Sections \ref{subsec:rand-walkHA} -- \ref{Qeq}. Then in Section \ref{plan} we explain the idea of the proof. In the remaining sections we give a formal proof. 



\subsection{Random walk on Hecke algebra}
\label{subsec:rand-walkHA}

Let $S_n$ be the symmetric group of permutations of $n$ elements. For each permutation $w \in S_n,$ recall that we denote by $l(w)$ the number of inversions in it. Let $\mathfrak{S}_n$ be the set of all nearest neighbor transpositions from $S_n$.

We will fix the parameter (introduced earlier in Section \ref{coupling})
\begin{equation*}
Q:=\frac{q}{p}\in [0,1).
\end{equation*}
A \textit{Hecke algebra} $\mathcal H (S_n)$ is the algebra with a linear basis $\{ T_w \}_{w \in S_n}$ and the multiplication which satisfies the following rules for any $s \in \mathfrak{S}_n$, $w \in S_n$:
\begin{equation}
\label{eq:HeckeRules}
\begin{cases}
T_s T_w = T_{sw}, \qquad & \mbox{if $l(sw)=l(w)+1$}  \\
T_s T_w = (1-Q) T_w + Q T_{sw}, \qquad & \mbox{if $l(sw)=l(w)-1$}.
\end{cases}
\end{equation}
It is clear that such rules can be used for a computation of the product $T_{w_1} T_{w_2}$ for any $w_1, w_2 \in S_n$; a non-trivial (but very well-known) part is that the rules are consistent and indeed define an (associative) multiplication.

Let $\mathfrak i: \mathcal H (S_n) \to \mathcal H (S_n)$ be a linear map such that $\mathfrak i \left( T_w \right) = T_{w^{-1}}$. The following proposition is well-known (and can be straightforwardly proved by induction in $l(w)$ with the use of \eqref{eq:HeckeRules}).

\begin{prop}
\label{prop:CPsymmetry}
The map $\mathfrak i$ is an involutive anti-homomorphism. In more detail, for any $T_1, \dots T_r \in \mathcal H (S_n)$ we have
\begin{equation*}
\mathfrak i \left( T_r T_{r-1} \dots T_2 T_1 \right) = \mathfrak i \left( T_1 \right) \mathfrak i \left( T_2 \right) \dots \mathfrak i \left( T_{r-1} \right) \mathfrak i \left( T_r \right),
\end{equation*}
and also, trivially, $\mathfrak i^2 \left( T_1 \right) = T_1$.
\end{prop}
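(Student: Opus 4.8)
The statement that $\mathfrak{i}$ is involutive is immediate: since $(w^{-1})^{-1}=w$ we have $\mathfrak{i}^2(T_w)=\mathfrak{i}(T_{w^{-1}})=T_w$ on basis elements, and $\mathfrak{i}^2=\mathrm{id}$ follows by linearity. The real content is the anti-homomorphism property, and by bilinearity of the multiplication it suffices to prove
\[
\mathfrak{i}(T_uT_v)=T_{v^{-1}}T_{u^{-1}}=\mathfrak{i}(T_v)\mathfrak{i}(T_u)
\]
for all $u,v\in S_n$; the $r$-fold statement then follows by a trivial induction on $r$. My plan is to first record two elementary facts about the basis $\{T_w\}$, then establish a generator lemma, and finally run an induction on $l(u)$.

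The two preliminary facts are: (i) if $w=s_1\cdots s_\ell$ is a reduced word ($\ell=l(w)$, each $s_i\in\mathfrak{S}_n$), then $T_w=T_{s_1}\cdots T_{s_\ell}$, which follows by induction on $\ell$ from the first line of \eqref{eq:HeckeRules}; and (ii) the right-multiplication analogue of \eqref{eq:HeckeRules}, namely $T_wT_s=T_{ws}$ when $l(ws)=l(w)+1$ and $T_wT_s=(1-Q)T_w+QT_{ws}$ when $l(ws)=l(w)-1$. I would derive (ii) from (i) together with the quadratic relation $T_s^2=(1-Q)T_s+QT_e$ (the case $w=s$ of \eqref{eq:HeckeRules}) and associativity: in the ascending case a reduced word for $w$ extends to one for $ws$, giving $T_wT_s=T_{ws}$ at once; in the descending case one writes $w=(ws)s$ with $l(ws\cdot s)=l(ws)+1$, so $T_w=T_{ws}T_s$ and $T_wT_s=T_{ws}T_s^2$ expands to $(1-Q)T_w+QT_{ws}$ as claimed.

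Next I would prove the generator lemma: for every $s\in\mathfrak{S}_n$ (so $s^{-1}=s$ and $\mathfrak{i}(T_s)=T_s$) and every $w\in S_n$,
\[
\mathfrak{i}(T_sT_w)=T_{w^{-1}}T_s .
\]
This is a two-case check resting on $l(w^{-1})=l(w)$ and $(w^{-1}s)^{-1}=sw$, which together give $l(w^{-1}s)=l(sw)$. If $l(sw)=l(w)+1$, then $T_sT_w=T_{sw}$, so the left side is $T_{(sw)^{-1}}=T_{w^{-1}s}$, while $l(w^{-1}s)=l(w^{-1})+1$ makes the right side $T_{w^{-1}}T_s=T_{w^{-1}s}$ by fact (ii). If $l(sw)=l(w)-1$, then $T_sT_w=(1-Q)T_w+QT_{sw}$, so the left side is $(1-Q)T_{w^{-1}}+QT_{w^{-1}s}$, which matches $T_{w^{-1}}T_s$ by the descending case of (ii). By linearity the lemma upgrades to $\mathfrak{i}(T_sx)=\mathfrak{i}(x)T_s$ for all $x\in\mathcal{H}(S_n)$.

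Finally I would establish $\mathfrak{i}(T_uT_v)=T_{v^{-1}}T_{u^{-1}}$ by induction on $l(u)$. The case $l(u)=0$ is trivial. For $l(u)\geq1$ write $u=su'$ with $l(u')=l(u)-1$, so that $T_u=T_sT_{u'}$ by (i). Applying the generator lemma with $x=T_{u'}T_v$ and then the induction hypothesis gives
\[
\mathfrak{i}(T_uT_v)=\mathfrak{i}(T_{u'}T_v)\,T_s=T_{v^{-1}}T_{(u')^{-1}}T_s,
\]
and since $T_{(u')^{-1}}T_s=\mathfrak{i}(T_sT_{u'})=\mathfrak{i}(T_u)=T_{u^{-1}}$ (again by the generator lemma, using $T_sT_{u'}=T_u$) this equals $T_{v^{-1}}T_{u^{-1}}$, as desired. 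The only genuinely delicate point is fact (ii): the relations \eqref{eq:HeckeRules} are stated only for left multiplication, so one must verify the mirror-image rules and, throughout, track carefully how lengths behave under $w\mapsto w^{-1}$ and under multiplication by a generator; associativity, whose validity is part of the definition of $\mathcal{H}(S_n)$, is used freely.
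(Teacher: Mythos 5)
Your proof is correct and follows exactly the route the paper indicates: the paper does not write out an argument at all, merely remarking that the proposition is well-known and can be proved by induction on $l(w)$ using \eqref{eq:HeckeRules}, and your proposal is a complete, careful implementation of precisely that induction (deriving the right-multiplication rules, the generator lemma $\mathfrak i(T_sx)=\mathfrak i(x)T_s$, and the length facts $l(w^{-1})=l(w)$, $l(w^{-1}s)=l(sw)$ that make it work). Nothing is missing.
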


For $a,b \in \Z$, $a<b$, recall that  $S_{a;b}$ is  the group of permutations of the set $[a;b]$. We have the natural embedding $S_{a_2;b_2} \subset S_{a_1;b_1}$ for any $a_1 \le a_2 \le b_2 \le b_1$. Denote by $\mathcal H_{a;b}$ the Hecke algebra corresponding to $S_{a;b}$. These Hecke algebras satisfy an analogous embedding relation.

Consider the following \textit{random walk on the Hecke algebra} $\mathcal H_{a;b}$. Let $\mathfrak{S}_{a;b}$ be the set of all nearest neighbor transpositions from $S_{a;b}$. We attach to every element $(z,z+1)$  of $\mathfrak{S}_{a;b}$ a Poisson process $\mathcal{P}(z)$ on $\R_{\ge 0}$ of rate $p$. All these Poisson processes are jointly independent. Next, we define the stochastic process $W_{a;b} (t)$ which takes values in $\mathcal H_{a;b}$. Its initial value is $W_{a;b} (0) = T_{id}$ (the basis vector corresponding to the identity permutation). When at a certain time $\tau \in \R_{\ge 0}$ one of the Poisson processes has a point, then we set $W_{a;b} (\tau) = T_s W_{a;b} (\tau^{-})$, where $s$ is the nearest neighbor transposition corresponding to the Poisson process  with point at $\tau$. Since the points from all Poisson processes are almost surely distinct and can be linearly ordered, this rule defines a   stochastic process $W_{a;b} (t)$ in continuous time.

We will need  the \textit{Mallows element}
$$
\mathcal{M}_{a;b} := \sum_{w \in S_{a;b}} Q^{(b-a+1)(b-a)/2 - l \left( w \right)} Z_{a;b} T_w, \qquad \mathcal{M}_{a;b} \in \mathcal H_{a;b},
$$
with $Z_{a;b}$ as in \eqref{Zab} and furthermore we define
 \begin{equation}
\mathcal{H}_{\mathrm{prob}}(S_{a;b})=\left\{h \in \mathcal H_{a;b}: h=\sum_{w\in  S_{a;b}} \kappa_w T_w,  \kappa_w \ge 0,  \sum_{w\in S_{a;b}} \kappa_w = 1\right\}.
\end{equation}
The main property of the element $\mathcal{M}_{a;b}$ is\footnote{It is sufficient to check this property for $h_{a;b}=T_s$, for any $s \in \mathfrak{S}_{a;b}$. For such a choice it follows from a detailed balance type equation.}
\begin{equation}\label{Melement}
h_{a;b} \mathcal{M}_{a;b} = \mathcal{M}_{a;b} h_{a;b} = \mathcal{M}_{a;b}, \qquad \mbox{for any $h_{a;b} \in \mathcal{H}_{\mathrm{prob}}(S_{a;b})$}.
\end{equation}

Finally note that due to the multiplication rule \eqref{eq:HeckeRules} the elements $\mathcal{M}_{a;b}$, $W_{a;b} (t)$, as well as their products, are elements of $\mathcal{H}_{\mathrm{prob}}(S_{a;b})$.
For any element of $\mathcal{H}_{\mathrm{prob}}(S_{a;b})$ one can define the random permutation \textit{generated} by this element of Hecke algebra by assigning to a permutation $w$ the probability $\kappa_w$.

\subsection{Multi-species ASEP as a random walk on Hecke algebra}
Let us make the link here between the random walk $W_{a;b}(t)$ and the multi-species ASEP on  $S_{a;b}$ as constructed in Section \ref{coupling}. Note that
the process $(W_{a;b}(t),t\geq0),$ which takes values in the Hecke algebra $\mathcal H_{a;b},$ immediately  induces  a stochastic process $(w_{t},t\geq0)$ on $S_{a;b}$.  The definition of $W_{a;b}(t)$ and the multiplication rules \eqref{eq:HeckeRules} imply that this process is exactly the multi-species ASEP as introduced in Section \ref{coupling}: The first rule of \eqref{eq:HeckeRules} says that (with $s=(z,z+1)$) if $w(z)<w(z+1),$ and the Poisson process $\mathcal{P}(z)$ has a jump, $w$ gets updated as $\sigma_{z,z+1}(w),$ whereas if $w(z)>w(z+1),$ and the Poisson process $\mathcal{P}(z)$ has a jump, $w$  stays the same with probability $1-Q,$  and gets updated as $\sigma_{z,z+1}(w)$ with probability $Q$. Note that by definition $W_{a;b}(0)=T_{id},$ so that also $w_{0}=id,$ however if we wish to start with another configuration, e.g. from a deterministic configuration $w \in S_{a;b}$, we just need to consider the permutation generated by $W_{a;b} (t) T_w$.

\subsection{Bringing into $Q-$equilibrium}\label{Qeq}
%

Let $h \in\mathcal{H}_{\mathrm{prob}}(S_{a;b})$ and $[a_1;b_1]\subseteq [a;b] $. Then \textit{bringing
 the segment $[a_1;b_1]$ into $Q-$equilibrium} means to multiply $h$ with the Mallows element $\mathcal{M}_{a_1;b_1}$ from the left. By \eqref{Melement}, this has the effect of distributing the colors of particles present in $[a_1;b_1]$ according to the stationary measure which is essentially the Mallows measure: It will formally coincide with the Mallows measure on $S_{a_1;b_1}$ if we relabel the colors of particles at positions inside the segment $[a_1;b_1]$ by integers from $[a_1;b_1]$ in a monotonous way.

Seeing $h$ as a random element of $S_{[a;b]}$, and projecting down the colors
$[a;b]$  to particles and holes, let $k'$ be the number of particles present in $[a_1;b_1].$ Then bringing into $Q-$equilibrium $[a_1;b_1]$ means that
 inside $[a_1;b_1]$, the particles and holes are distributed according to $\pi_{[a_1; b_1],k'}.$

\subsection{Idea of the proof}\label{plan}
We have recalled all the necessary notions, and will give a proof of Theorem \ref{alexey} in the following. Before this, let us briefly describe the idea of the proof.

Theorem \ref{alexey} asks us to analyze the continuous time ASEP which starts with a nontrivial initial configuration in a very precise manner. There are currently no general tools which seem to be applicable to such sorts of questions (see, however, \cite{QS20}); ASEP with step initial data being an exception where a detailed result in the form of Theorem \ref{ASEPthm} is available.  Fortunately, the algebraic framework of the Hecke algebra allows to get an \textit{exact distribution identity} which relates ASEP started with our initial configuration and the step-initial condition. It appears in the following way.

In principle, any initial configuration $w$ is generated by the element $T_w$ of the Hecke algebra. So if we are interested in the continuous time ASEP, we might want to study $W_{a;b} (t) T_w$. How to do this? The first crucial idea is that we can study 
$$
\mathfrak{i} \left( T_{w^{-1}} \mathfrak{i} \left( W_{a;b} (t) \right) \right) = \mathfrak{i} \left( T_{w^{-1}} W_{a;b} (t) \right)
$$ 
instead due to Proposition \ref{prop:CPsymmetry}. Probabilistically, it is very important (and highly non-intuitive) that we first run continuous time ASEP from the identity permutation, which can be projected to the step initial condition, and only after this we apply the multiplication by $T_{w^{-1}}$. Since we have a precise information about the step initial condition ASEP, this gives  hope to compute something about the configuration that we are interested in. Yet it is arguably impossible for general $w$. And here comes the second crucial idea --- we utilize Mallows elements which can be used to create an initial configuration which is sufficiently close to $\zeta^{0}$ (defined in \eqref{zeta01}), the particle configuration we are interested in.  The use of Mallows elements requires some estimates to relate it to the deterministic initial configuration $\zeta^{0}$ , but all of them in our situation hold with a large margin.

\subsection{Distribution identity}

Let us fix three positive integers $S,M,R$ and a positive real $t$. We will consider random walks on the Hecke algebra $\mathcal{H}_{-S-R;S+M}$. Recall that the stochastic process $W_{-S-R;S+M} (t)$ and the Mallows elements were defined in Section \ref{subsec:rand-walkHA}.

\begin{prop}
\label{prop:DistrIdent1}
With $\,{\buildrel d \over =}$ denoting equality in distribution, we have
\begin{equation*}
W_{-S-R;S+M} (t) \mathcal{M}_{-S-R;0} \mathcal{M}_{-S;S+M} \,{\buildrel d \over =}\, \mathfrak i \left( \mathcal{M}_{-S;S+M} \mathcal{M}_{-S-R;0} W_{-S-R;S+M} (t) \right).
\end{equation*}
\end{prop}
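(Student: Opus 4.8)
The plan is to use the involutive anti-homomorphism property of $\mathfrak{i}$ from Proposition \ref{prop:CPsymmetry} together with the invariance property \eqref{Melement} of the Mallows elements. Let me first observe what the two sides have in common. The key structural fact is that $\mathfrak{i}$ fixes each Mallows element: since $\mathfrak{i}(T_w) = T_{w^{-1}}$ and $l(w) = l(w^{-1})$ (inversions are preserved under taking inverses), we have
\begin{equation*}
\mathfrak{i}(\mathcal{M}_{a;b}) = \sum_{w \in S_{a;b}} Q^{(b-a+1)(b-a)/2 - l(w)} Z_{a;b}\, T_{w^{-1}} = \mathcal{M}_{a;b},
\end{equation*}
because $w \mapsto w^{-1}$ is a bijection of $S_{a;b}$ preserving the coefficient. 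So each $\mathcal{M}$ is $\mathfrak{i}$-invariant.

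Applying the anti-homomorphism property to the right-hand side, $\mathfrak{i}$ reverses the order of the product and acts factorwise:
\begin{equation*}
\mathfrak{i}\bigl(\mathcal{M}_{-S;S+M}\, \mathcal{M}_{-S-R;0}\, W_{-S-R;S+M}(t)\bigr) = \mathfrak{i}\bigl(W_{-S-R;S+M}(t)\bigr)\, \mathfrak{i}\bigl(\mathcal{M}_{-S-R;0}\bigr)\, \mathfrak{i}\bigl(\mathcal{M}_{-S;S+M}\bigr).
\end{equation*}
Using the $\mathfrak{i}$-invariance of both Mallows elements, the last two factors become $\mathcal{M}_{-S-R;0}\,\mathcal{M}_{-S;S+M}$, so the right-hand side equals $\mathfrak{i}(W_{-S-R;S+M}(t))\,\mathcal{M}_{-S-R;0}\,\mathcal{M}_{-S;S+M}$ as a deterministic algebraic identity between random elements. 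Comparing with the left-hand side $W_{-S-R;S+M}(t)\,\mathcal{M}_{-S-R;0}\,\mathcal{M}_{-S;S+M}$, the claim therefore reduces to showing the distributional identity
\begin{equation*}
W_{-S-R;S+M}(t) \,{\buildrel d \over =}\, \mathfrak{i}\bigl(W_{-S-R;S+M}(t)\bigr),
\end{equation*}
that is, the random element $W(t)$ has the same law as its $\mathfrak{i}$-image.

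The main work, then, is to establish this last identity, and I expect it to be the crux of the argument. The element $W_{-S-R;S+M}(t)$ is built as a time-ordered product $T_{s_m} \cdots T_{s_2} T_{s_1}$ of transpositions indexed by the Poisson clock rings in $[0,t]$, read from earliest to latest. Applying $\mathfrak{i}$ reverses this product to $\mathfrak{i}(T_{s_1})\cdots\mathfrak{i}(T_{s_m}) = T_{s_1}\cdots T_{s_m}$, since each $s_i$ is its own inverse. The distributional content is thus that reversing the temporal order of the Poisson rings yields a process with the same law. This should follow from the time-reversal symmetry of the collection of rate-$p$ Poisson processes on $[0,t]$: the ordered sequence of (transposition, time) pairs read forward has the same distribution as read backward, because a Poisson process on $[0,t]$ is invariant in law under the reflection $s \mapsto t-s$. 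I would make this precise by conditioning on the total number of rings and their labels, then using exchangeability of the ordered jump times of a Poisson process under reversal, and finally noting that only the order of the transpositions (not the actual times) enters $W(t)$. The one point requiring care is confirming that the forward-time product convention in the definition of $W(t)$ matches up correctly with the reversal, so that $\mathfrak{i}(W(t))$ is genuinely the ``time-reversed'' product and not some other reordering; once the bookkeeping of which ring multiplies from which side is pinned down, the equality in law is immediate.
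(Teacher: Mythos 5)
Your proposal is correct and takes essentially the same route as the paper's proof: $\mathfrak{i}$-invariance of the Mallows elements, reduction via the anti-homomorphism property of Proposition \ref{prop:CPsymmetry}, and the key distributional identity $W_{-S-R;S+M}(t) \,{\buildrel d \over =}\, \mathfrak{i}\left(W_{-S-R;S+M}(t)\right)$ established by time-reversal symmetry of the Poisson clocks. The paper phrases this last step as the equality $\Pb\left(W_{a;b}(t)=T_{s_{1}}\cdots T_{s_{k}}\right)=\Pb\left(W_{a;b}(t)=T_{s_{k}}\cdots T_{s_{1}}\right)$, which is precisely your reversed-word argument.
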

\begin{proof}
Note that   $\mathcal{M}_{-S-R;0}$, $\mathcal{M}_{-S;S+M}$, are invariant under the action of $\mathfrak{i}$.
As for $W_{a;b}(t), $ note that for arbitrary $k\geq 1$ and $s_{1},\ldots,s_k \in \mathfrak{S}_{a;b}$ we have  $\mathfrak{i}(W_{a;b}(t))=T_{s_{k}}\cdots T_{s_{1}}$ iff $W_{a;b}(t)=T_{s_{1}}\cdots T_{s_{k}},$ so that  we show $\mathfrak{i}(W_{a;b}(t))\,{\buildrel d \over =}W_{a;b}(t)$ by computing
\begin{align*}
\Pb(\mathfrak{i}(W_{a;b}(t))=T_{s_{k}}T_{s_{k-1}}\cdots T_{s_{1}})&=\Pb(W_{a;b}(t)=T_{s_{1}}T_{s_{2}}\cdots T_{s_{k}})\\&=\Pb(W_{a;b}(t)=T_{s_{k}}T_{s_{k-1}}\cdots T_{s_{1}}).
\end{align*}
Setting $a=-S-R, b=S+M$ and applying  Proposition \ref{prop:CPsymmetry}  yields the result.
\end{proof}

We will need one particular corollary of this distribution identity. Denote by $\pi_{S,R,M;t}$ the random permutation generated by $$W_{-S-R;S+M} (t) \mathcal{M}_{-S-R;0} \mathcal{M}_{-S;S+M},$$ and denote by $\hat \pi_{S,R,M;t}$ the random permutation generated by
$$\mathcal{M}_{-S;S+M} \mathcal{M}_{-S-R;0} W_{-S-R;S+M} (t).$$

\begin{cor}
\label{cor:Req}
For any $x \le y \in \R$, we have
\begin{multline}
\label{eq:DistIdent2}
\Pb \left( \min_{-S-R \le i \le 0} \pi_{S,R,M;t}^{-1} (i) > x, \max_{0 < j \le S+M} \pi_{S,R,M;t}^{-1} (j) \le y \right)
\\ = \Pb \left( \min_{-S-R \le i \le 0} \hat \pi_{S,R,M;t} (i) > x, \max_{0 < j \le S+M} \hat \pi_{S,R,M;t} (j) \le y \right).
\end{multline}
\end{cor}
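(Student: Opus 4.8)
The plan is to reduce the claimed identity \eqref{eq:DistIdent2} to the single fact that $\pi_{S,R,M;t}^{-1}$ and $\hat\pi_{S,R,M;t}$ have the same law, which in turn is an immediate consequence of Proposition \ref{prop:DistrIdent1}.

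First I would record the elementary but crucial observation about how $\mathfrak i$ acts at the level of generated permutations. If $h = \sum_{w} \kappa_w T_w \in \mathcal{H}_{\mathrm{prob}}(S_{-S-R;S+M})$ generates the random permutation $X$, meaning $\Pb(X = w) = \kappa_w$, then since $\mathfrak i(h) = \sum_w \kappa_w T_{w^{-1}}$, reindexing by $v = w^{-1}$ shows that $\mathfrak i(h)$ generates the random permutation $Y$ with $\Pb(Y = v) = \kappa_{v^{-1}} = \Pb(X^{-1} = v)$. In other words, applying $\mathfrak i$ to a generating element inverts the generated permutation in distribution. Here I use that all the relevant products lie in $\mathcal{H}_{\mathrm{prob}}(S_{-S-R;S+M})$, as noted after Proposition \ref{prop:CPsymmetry}, so that ``generated permutation'' is meaningful.

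Next I would apply this to Proposition \ref{prop:DistrIdent1}. By definition the left-hand side $W_{-S-R;S+M}(t)\mathcal{M}_{-S-R;0}\mathcal{M}_{-S;S+M}$ generates $\pi_{S,R,M;t}$, while $\mathcal{M}_{-S;S+M}\mathcal{M}_{-S-R;0}W_{-S-R;S+M}(t)$ generates $\hat\pi_{S,R,M;t}$; hence by the previous paragraph the right-hand side $\mathfrak i\big(\mathcal{M}_{-S;S+M}\mathcal{M}_{-S-R;0}W_{-S-R;S+M}(t)\big)$ generates $\hat\pi_{S,R,M;t}^{-1}$. Proposition \ref{prop:DistrIdent1} therefore yields
$$\pi_{S,R,M;t} \,{\buildrel d \over =}\, \hat\pi_{S,R,M;t}^{-1}, \qquad\text{equivalently}\qquad \pi_{S,R,M;t}^{-1} \,{\buildrel d \over =}\, \hat\pi_{S,R,M;t}.$$
Finally I would conclude by substitution: the event on the left-hand side of \eqref{eq:DistIdent2} is a function of $\pi_{S,R,M;t}^{-1}$ alone, namely the requirement that $\pi_{S,R,M;t}^{-1}(i) > x$ for all $i$ with $-S-R \le i \le 0$ and $\pi_{S,R,M;t}^{-1}(j) \le y$ for all $j$ with $0 < j \le S+M$; the event on the right-hand side is the identical event with $\pi_{S,R,M;t}^{-1}$ replaced throughout by $\hat\pi_{S,R,M;t}$. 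Since these two random permutations are equal in law by the displayed identity, the two probabilities coincide, which is exactly \eqref{eq:DistIdent2}.

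The only point demanding care --- and the single place where the argument could go wrong --- is the bookkeeping of the inverse: the map $\mathfrak i$ inverts the generated permutation, so it is $\pi_{S,R,M;t}^{-1}$ (and not $\pi_{S,R,M;t}$ itself) that matches $\hat\pi_{S,R,M;t}$, which is precisely why the statement pairs $\pi_{S,R,M;t}^{-1}$ with $\hat\pi_{S,R,M;t}$. Everything else is formal, and no further structure of the Mallows elements or of $W_{-S-R;S+M}(t)$ is needed at this step.
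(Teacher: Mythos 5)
Your proposal is correct and matches the paper's approach: the paper's entire proof is ``Immediate from Proposition \ref{prop:DistrIdent1},'' and your write-up supplies exactly the bookkeeping that makes it immediate, namely that $\mathfrak i$ inverts the generated permutation so that $\pi_{S,R,M;t}^{-1}$ and $\hat\pi_{S,R,M;t}$ are equal in law, after which \eqref{eq:DistIdent2} follows by substitution.
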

\begin{proof}
Immediate from Proposition \ref{prop:DistrIdent1}.
\end{proof}

The event on the left-hand side of \eqref{eq:DistIdent2} distinguishes whether a particle is of color $\le 0$ and $>0$. Therefore, the probability of this event can be expressed as a probability of the corresponding event in a single-species ASEP. Similarly, the right-hand side of \eqref{eq:DistIdent2} distinguishes whether a particle is of color $\le x$, inside $(x;y]$, or $>y$. Therefore, the probability of this event can be expressed as a probability of the corresponding event in a two-species ASEP, i.e. ASEP which contains first-class particles, second-class particles, and holes. Let us present these simpler processes and events more formally.



\subsection{Auxiliary processes}

In this section we will introduce several auxiliary ASEP processes. All of them are continuous time processes which start from distinct initial configurations. These processes may depend on $M,R,S,x,y$, but we omit this in notations. See Figure \ref{Graph2} for an illustration.

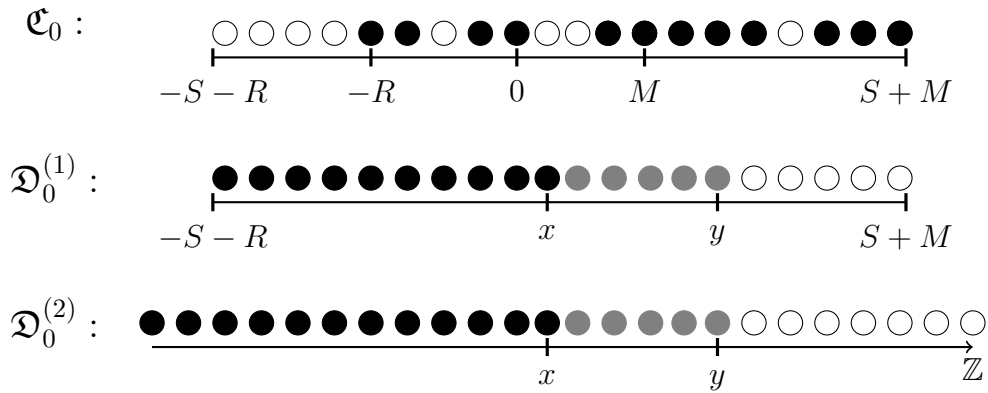
\begin{figure}\begin{center}
\begin{tikzpicture}[scale=1.6]

  \draw[thick] (-0.5,0) -- (5.2,0);

   \draw (-1.8,0.5) node[below] {\large{$\mathfrak{C}_{0}:$}};

\draw (3.05,0.2) circle (0.1);

   \draw (-0.5,-0.1) node[below] {$-S-R$};
      \draw (5.2,-0.1) node[below] {$S+M$};
 \draw (3.05,-0.1) node[below] {$M$};
  \draw (2,-0.1) node[below] {$0$};
    \draw (0.8,-0.1) node[below] {$-R$};
   \foreach \x in {3.05,2,0.8,5.2,-0.5}
      \draw[very thick] (\x,0.075)--(\x,-0.075);

  \foreach \x in {-0.1,-0.4, 4.25,1.4,2.25,2.5,0.2,0.5}
      \draw (\x,0.2) circle(0.1);
   \filldraw (1.1,0.2) circle (0.1);
   \foreach \x in {2.75,  3.35,3.65,3.95,4.55,4.85,5.15}
      \filldraw (\x,0.2) circle(0.1);
         \foreach \x in {2.75, 3.05, 3.35,3.65,3.95,4.55,4.85,5.15,1.7,2,0.8}
      \filldraw (\x,0.2) circle(0.1);

      \begin{scope}[yshift=-1.2cm]
        \draw[thick] (-0.5,0) -- (5.2,0);

   \draw (-1.8,0.5) node[below] {\large{$\mathfrak{D}_{0}^{(1)}:$}};

   \draw (-0.5,-0.1) node[below] {$-S-R$};
      \draw (5.2,-0.1) node[below] {$S+M$};
    \draw (3.65,-0.1) node[below] {$y$};
      \draw (2.25,-0.1) node[below] {$x$};
   \foreach \x in {5.2,-0.5,3.65,2.25}
      \draw[very thick] (\x,0.075)--(\x,-0.075);

         \foreach \x in {2.25,2,1.7, 1.4,1.1,0.8,0.5, -0.1,-0.4,0.2}
      \filldraw (\x,0.2) circle(0.1);
         \foreach \x in {2.5,2.8,3.1,3.375,3.65}
      \filldraw[gray] (\x,0.2) circle(0.1);
       \foreach \x in {3.95,4.25,4.55,4.85,5.15 }
      \draw (\x,0.2) circle(0.1);
 \end{scope}

    \begin{scope}[yshift=-2.4cm]
              \draw (-1.8,0.5) node[below] {\large{$\mathfrak{D}_{0}^{(2)}:$}};

  \draw[thick, ->] (-1,0) -- (5.75,0) node[below] {$\mathbb Z$};


    \draw (3.65,-0.1) node[below] {$y$};
      \draw (2.25,-0.1) node[below] {$x$};
   \foreach \x in {3.65,2.25}
      \draw[very thick] (\x,0.075)--(\x,-0.075);

         \foreach \x in {2.25,2,1.7, 1.4,1.1,0.8,0.5, -0.1,-0.4,0.2,-0.7,-1}
      \filldraw (\x,0.2) circle(0.1);
         \foreach \x in {2.5,2.8,3.1,3.375,3.65}
      \filldraw[gray] (\x,0.2) circle(0.1);
       \foreach \x in {3.95,4.25,4.55,4.85,5.15,5.45,5.75 }
      \draw (\x,0.2) circle(0.1); \end{scope}
\end{tikzpicture}\end{center}
\caption{From top to bottom: The three particle configurations $\mathfrak{C}_0, \mathfrak{D}^{(1)}_0, \mathfrak{D}^{(2)}_0. $ Black/gray/white balls represent first class particles/second class particles/holes.
 $\mathfrak{C}_0$ is random except for $p=1$ in which case
$\mathfrak{C}_0=\mathbf{1}_{[-R+1;0]}+\mathbf{1}_{[M;S+M]}$,  for $p<1,$ the particles remain within $\mathcal{O}(1)$ distance from  $[-R+1;0],[M;S+M].$
For $x<y,$ the configuration $\mathfrak{D}^{(1)}_0$ is on $[-S-R;S+M]$ and has first class particles in $[-S-R;x], $ second class particles on $(x;y], $ and holes in  $(y;S+M], $ and  $\mathfrak{D}^{(2)}_0$ is the extension of $\mathfrak{D}^{(1)}_0$ to $\Z$.
}
\label{Graph2}
\end{figure}
$\mathfrak{C}_0$ is a (random) configuration of particles and holes on $[-S-R;S+M]$ obtained in the following way. We start with particles at $[-S-R;0]$ and holes in $[1;S+M]$. First, we bring into $Q$-equilibrium the segment $[-S;S+M]$. Second, we bring into $Q$-equilibrium the segment $[-S-R;0]$.

$\mathfrak{D}^{(1)}_0$ is a (deterministic) configuration of first class particles, second-class particles and holes on $[-S-R;S+M]$ positioned in the following way: At positions $\le x$ we have first class particles, at positions inside $(x;y]$ we have second class particles, and at positions $>y$ there are holes.

$\mathfrak{D}^{(2)}_0$ is a (deterministic) configuration of first class particles, second-class particles and holes on $\Z$ positioned in essentially the same way: At positions $\le x$ we have first class particles, at positions inside $(x;y]$ we have second class particles, and at positions $>y$ there are holes.

$\mathfrak{C}_t$,  $\mathfrak{D}^{(1)}_t$, $\mathfrak{D}^{(2)}_t$ are the notations for configurations of these processes after time $t$.

$\mathfrak{\tilde D}^{(1)}_t$ is the (random) configuration of particles obtained from the configuration $\mathfrak{D}^{(1)}_t$ by  bringing into $Q$-equilibrium the segment $[-S-R;0]$ in it.  $\mathfrak{\hat D}^{(1)}_t$ is obtained from $\mathfrak{\tilde D}^{(1)}_t$ by bringing into $Q$-equilibrium the segment $[-S;S+M]$ in it. See also Figure \ref{Graph3}.

Recall that for a (possibly random) configuration $\mathfrak{A}$ we denote by $\mathcal{L} \left( \mathfrak{A} \right)$ the position of the leftmost particle in $\mathfrak{A}$, and we denote by $\mathcal{R} \left( \mathfrak{A} \right)$ the position of the rightmost hole in $\mathfrak{A}$.  We are in a position to reformulate Corollary \ref{cor:Req} in the language of these processes.

\begin{prop}
\label{prop:DistId3}
We have
\begin{multline}
\label{eq:DistIdent3}
\Pb \left( \mathcal{L} \left( \mathfrak{C}_t \right) > x, \mathcal{R} \left( \mathfrak{C}_t \right) \le y \right)
\\ = \Pb \left( \mbox{all first class particles in $\mathfrak{\hat D}^{(1)}_t$ are at positions $>0$}, \right. \\ \left. \mbox{all holes in $\mathfrak{\hat D}^{(1)}_t$ are at positions $\le 0$} \right).
\end{multline}
\end{prop}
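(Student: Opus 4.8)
The plan is to deduce Proposition \ref{prop:DistId3} directly from Corollary \ref{cor:Req} by recognizing that the two events appearing in \eqref{eq:DistIdent2} are precisely the two events in \eqref{eq:DistIdent3}, merely re-expressed through the color projections that produce $\mathfrak{C}_t$ and $\mathfrak{\hat D}^{(1)}_t$. The one structural fact I would invoke is that the color projection commutes with all operations used to build these configurations, namely the dynamics $W_{-S-R;S+M}(t)$ and the left-multiplications by Mallows elements. For the dynamics this is the standard consistency of the multi-species ASEP under monotone lumping of colors (the update rule in \eqref{eq:HeckeRules} depends only on the relative order of the two colors being compared, so merging colors into classes yields the same graphical construction), and for the Mallows elements it is exactly the effect of bringing a segment into $Q$-equilibrium described in Section \ref{Qeq}. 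Granting this, the argument is bookkeeping.

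First I would treat the left-hand side. Reading the generator $W_{-S-R;S+M}(t)\,\mathcal{M}_{-S-R;0}\,\mathcal{M}_{-S;S+M}$ of $\pi_{S,R,M;t}$ from right to left (the rightmost factor is the initial datum, each further factor is a later operation) reproduces verbatim the construction of $\mathfrak{C}_t$: start from $T_{id}$, whose threshold-$0$ projection (colors $\le 0$ are particles, colors $>0$ are holes) is the step configuration with particles on $[-S-R;0]$; bring $[-S;S+M]$ and then $[-S-R;0]$ into $Q$-equilibrium; finally run the dynamics for time $t$. Hence $\mathfrak{C}_t$ is exactly the single-species projection of $\pi_{S,R,M;t}$. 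Since the particle positions are $\{\pi_{S,R,M;t}^{-1}(i):-S-R\le i\le 0\}$ and the hole positions are $\{\pi_{S,R,M;t}^{-1}(j):0<j\le S+M\}$, we obtain $\mathcal{L}(\mathfrak{C}_t)=\min_{-S-R\le i\le 0}\pi_{S,R,M;t}^{-1}(i)$ and $\mathcal{R}(\mathfrak{C}_t)=\max_{0<j\le S+M}\pi_{S,R,M;t}^{-1}(j)$, so the left-hand side of \eqref{eq:DistIdent2} equals $\Pb(\mathcal{L}(\mathfrak{C}_t)>x,\ \mathcal{R}(\mathfrak{C}_t)\le y)$, the left-hand side of \eqref{eq:DistIdent3}.

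For the right-hand side I would argue symmetrically. Reading the generator $\mathcal{M}_{-S;S+M}\,\mathcal{M}_{-S-R;0}\,W_{-S-R;S+M}(t)$ of $\hat\pi_{S,R,M;t}$ from right to left, and now projecting with the two thresholds $x\le y$ (colors $\le x$ are first class, colors in $(x;y]$ are second class, colors $>y$ are holes), reproduces the construction of $\mathfrak{\hat D}^{(1)}_t$: the threshold projection of $T_{id}$ is $\mathfrak{D}^{(1)}_0$; running the dynamics gives $\mathfrak{D}^{(1)}_t$; bringing $[-S-R;0]$ into $Q$-equilibrium gives $\mathfrak{\tilde D}^{(1)}_t$; and bringing $[-S;S+M]$ into $Q$-equilibrium gives $\mathfrak{\hat D}^{(1)}_t$. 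Thus $\mathfrak{\hat D}^{(1)}_t$ is the two-threshold projection of $\hat\pi_{S,R,M;t}$. Now $\min_{-S-R\le i\le 0}\hat\pi_{S,R,M;t}(i)>x$ says that no position in $[-S-R;0]$ carries a color $\le x$, i.e. all first class particles sit at positions $>0$; and $\max_{0<j\le S+M}\hat\pi_{S,R,M;t}(j)\le y$ says that no position in $[1;S+M]$ carries a color $>y$, i.e. all holes sit at positions $\le 0$. Hence the right-hand side of \eqref{eq:DistIdent2} equals the right-hand side of \eqref{eq:DistIdent3}, and Proposition \ref{prop:DistId3} follows from Corollary \ref{cor:Req}.

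The only genuine point to nail down, and the part I expect to require the most care, is the commutation of the color projection with the dynamics and with the $Q$-equilibration, together with checking that the right-to-left reading of each Hecke-algebra product matches, step for step, the verbal definitions of $\mathfrak{C}_t$ and $\mathfrak{\hat D}^{(1)}_t$. Once that is in place, the translation between the minima and maxima over colors or positions and the geometric quantities $\mathcal{L}$, $\mathcal{R}$ and the first-class/hole locations is routine.
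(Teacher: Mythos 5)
Your proposal is correct and follows essentially the same route as the paper: both deduce the identity from Corollary \ref{cor:Req} by recognizing $\mathfrak{C}_t$ as the threshold-$0$ color projection of $\pi_{S,R,M;t}$ and $\mathfrak{\hat D}^{(1)}_t$ as the two-threshold $(x,y]$ projection of $\hat\pi_{S,R,M;t}$, then translating the min/max conditions on $\pi^{-1}_{S,R,M;t}$ and $\hat\pi_{S,R,M;t}$ into the geometric events on $\mathcal{L}$, $\mathcal{R}$ and the first-class/hole positions. The paper's proof is just a terser statement of exactly this bookkeeping, with the lumping consistency you highlight left implicit.
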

\begin{proof}
Note that $\pi^{-1}_{S,R,M;t}$ maps types of particles into positions. If we map all colors $\le 0$ into particles, and colors $> 0$ into holes, we will obtain the coupling of $\pi^{-1}_{S,R,M;t}$ and $\mathfrak{C}_t$, which will give the expression in the left-hand side of \eqref{eq:DistIdent3}. Analogously, to obtain the right-hand side, in $\hat \pi_{S,R,M;t}$ we need to map all types $> y$ into holes, all types inside $(x;y]$ into second class particles, and all types $\le x$ into first class particles.
\end{proof}

\subsection{Limit transition }

In the remainder of the section we will consider sequences of parameters $M=M(N) = N-k_N+1$, $R=R(N) = k_N$, $x=x(N)=N- 2 k_N - N^{1/10}$, $y=y(N)=N- 2 k_N + N^{1/10} $, $t = t(N)= g(k_N,c)$. We also assume from now on  that $S=S(N)$ is an arbitrary sequence of numbers such that $S \ge N^N$. We will study the $N \to \infty$ limit. The first key result is the following.

\begin{prop}
\label{prop:baseMult}
We have
\begin{multline}
\label{prop:brr}
\liminf_{N\to\infty} \Pb \left( \mbox{all first class particles in $\mathfrak{\hat D}^{(1)}_t$ are at positions $>0$}, \right. \\ \left. \mbox{ all holes in $\mathfrak{\hat D}^{(1)}_t$ are at positions $\le 0$} \right) \ge F_{\GUE} \left( c f(\alpha) \right) .
\end{multline}

\end{prop}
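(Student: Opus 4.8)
The plan is to evaluate the probability on the right-hand side directly, tracing the event through the two $Q$-equilibria and isolating the single Tracy--Widom fluctuation responsible for $F_\GUE$. Since $t=g(k_N,c)=\Or(N)$ while $S\ge N^N$, the two endpoints $-S-R$ and $S+M$ lie in homogeneous regions (only first-class particles near $-S-R$, only holes near $S+M$) that the nontrivial dynamics cannot reach within time $t$. Coupling $\mathfrak{D}^{(1)}$ and $\mathfrak{D}^{(2)}$ by the basic coupling, the two configurations agree on the window around the origin up to a $\Pb$-negligible error, so every color count used below may be computed in the infinite process $\mathfrak{D}^{(2)}_t$. Projecting $\mathfrak{D}^{(2)}$ onto ``first-class $=$ particle, everything else $=$ hole'' turns the first-class particles into an honest single-species step initial condition with interface at $x$, making Theorem \ref{ASEPthm} and Corollary \ref{cor} available.

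Next I would unwind the two operations of bringing into $Q$-equilibrium. By \eqref{Melement} each of them replaces the colors inside its segment by a Mallows-distributed rearrangement; for $Q<1$ this concentrates the low colors (first-class particles) at the right end and the high colors (holes) at the left end, with interface fluctuations of size $\Or(1)$ and exponentially small tails --- exactly the estimate from \cite[Proposition 3.1]{N20CMP} already used in Proposition \ref{3.2}. Writing $\Phi$ for the number of first-class particles of $\mathfrak{D}^{(2)}_t$ at positions $>0$ and $\Sigma\in[0,2N^{1/10}]$ for the number of second-class particles at positions $>0$, the bookkeeping for the successive sorts on $[-S-R;0]$ and then $[-S;S+M]$ shows (up to the $\Or(1)$ Mallows fluctuations) that the requirement that all first-class particles of $\mathfrak{\hat D}^{(1)}_t$ sit at positions $>0$ is equivalent to $\Phi\ge N-k_N-N^{1/10}$, while the requirement that all holes sit at positions $\le 0$ is equivalent to $\Phi+\Sigma\ge N-k_N$. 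Because the second-class buffer has width $y-x=2N^{1/10}$, both are \emph{one-sided} lower bounds on the same quantity $\Phi$, differing only on the scale $N^{1/10}$.

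It remains to evaluate $\Pb(\Phi\ge N-k_N+\Or(N^{1/10}))$. By conservation $\{\Phi\ge m\}$ is the event that the $m$-th first-class particle from the right lies to the right of $0$, and writing the first-class step as the standard step shifted by $x+1$ turns this into $\{x^{\mathrm{step}}_{m}(g(k_N,c))>2k_N-N+N^{1/10}\}$ with $m\approx N-k_N$. Although the label is now $\approx N-k_N$, the computation in the proof of Corollary \ref{cor} applies with $\sigma\to(1-a)/D$: the centering $(1-2\sqrt{\sigma})\tilde N$ equals $2k_N-N+o(N^{1/3})$, matching the threshold, and the fluctuation scale produces the argument $c\,f(1-\alpha)$. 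Since $f(\alpha)=\frac{(\alpha(1-\alpha))^{1/6}}{(\sqrt{\alpha}+\sqrt{1-\alpha})^{4/3}}$ is invariant under $\alpha\mapsto 1-\alpha$ and $N^{1/10}=o(N^{1/3})$ is absorbed, this probability tends to $F_\GUE(cf(\alpha))$. For the lower bound I would pass to the sub-event $\{\Phi\ge N-k_N+N^{1/10}\}$, which forces both reduced conditions to hold with margin $\ge N^{1/10}$ and hence makes every Mallows sort succeed outside an event of exponentially small probability; thus $\liminf_N\Pb(\cdots)\ge\lim_N\Pb(\Phi\ge N-k_N+N^{1/10})-o(1)=F_\GUE(cf(\alpha))$.

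The main obstacle is the second paragraph: carrying out the two-segment $Q$-equilibrium bookkeeping and proving rigorously that the sorting event is sandwiched between the current conditions $\Phi\ge N-k_N-N^{1/10}$ and $\Phi+\Sigma\ge N-k_N$, with all Mallows interface fluctuations and all margins controlled on the scale $o(N^{1/10})$ so that no $F_\GUE$-mass is lost when passing to the clean sub-event. Everything else --- the finite-speed localization, the passage from $\mathfrak{D}^{(1)}$ to $\mathfrak{D}^{(2)}$, and the application of Corollary \ref{cor} --- holds with a comfortable margin.
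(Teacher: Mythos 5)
Your proposal is correct, and its skeleton is the same as the paper's proof: couple $\mathfrak{D}^{(1)}$ with $\mathfrak{D}^{(2)}$ by finite speed of propagation, reduce the two-stage $Q$-equilibrium sorting to a single one-sided particle count holding with margin $N^{1/10}$ (so that the Mallows interface fluctuations, controlled via Proposition \ref{3.2}, cause failure only with probability $C_1 e^{-C_2 N^{1/10}}$), and identify the limiting tail probability of that count through Corollary \ref{cor}. The one genuine difference is the choice of the count. The paper merges first- and second-class particles and tracks $\mathcal{N}_1$, the number of holes at positions $\le 0$; by particle--hole duality this becomes a step-initial-data event with particle label $k_N + N^{1/10}$ and threshold $N-2k_N+N^{1/10}$, i.e.\ exactly the form of Corollary \ref{cor} at density $\alpha$. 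You instead merge second-class particles with holes and track $\Phi$, the number of first-class particles at positions $>0$, which is a step event with label $N-k_N+N^{1/10}$; this forces you to invoke Corollary \ref{cor} at density $1-\alpha$ together with the symmetries $g(k,c)=g(N-k,c)$ and $f(\alpha)=f(1-\alpha)$, both of which indeed hold, so the same limit $F_{\GUE}(cf(\alpha))$ comes out. Your bookkeeping identities are also correct: with perfect sorting, all first-class particles of $\hat{\mathfrak{D}}^{(1)}_t$ end up at positions $>0$ iff $\Phi \ge N-k_N-N^{1/10}$, all holes end up at positions $\le 0$ iff $\Phi+\Sigma \ge N-k_N$, and your sufficient event $\{\Phi \ge N-k_N+N^{1/10}\}$ supplies the needed $N^{1/10}$ margins for both. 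Note that your event differs slightly from the paper's $\{\mathcal{N}_1 \ge k_N+N^{1/10}\}$: since $\mathcal{N}_1 = \Phi+\Sigma+2k_N-N-N^{1/10}$, the latter reads $\{\Phi+\Sigma \ge N-k_N+2N^{1/10}\}$, so neither event contains the other, but both are sufficient and both carry the same $F_{\GUE}$ limit. In short, the two routes cost the same amount of work; the paper's choice of statistic merely avoids the appeal to the $\alpha \mapsto 1-\alpha$ symmetry.
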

\begin{proof}
Throughout the proof, $C_1 , C_2>0$ will be some constants independent of $N$ whose values are immaterial and may change from line to line.
Let $\mathcal{N}_1$ be the number of holes to the left of 0 in the configuration $\mathfrak{D}^{(1)}_t$ and let $\mathcal{N}_2$ be the number of holes to the left of 0 in the configuration $\mathfrak{D}^{(2)}_t$. Clearly, if $S$ is incomparably larger than $M$, $N$, and $t$, then these two quantities have almost the same distribution. More formally, let us couple the processes $\mathfrak{D}^{(1)}_t$ and $\mathfrak{D}^{(2)}_t$ via the basic coupling. Note that if there exists at least one position inside $[-S-R;x]$ and at least one position inside $[y;S+M]$ such that no signals from Poisson processes were received at these points, then the configurations of $\mathfrak{D}^{(1)}_t$ and $\mathfrak{D}^{(2)}_t$ inside the interval $[-S-R;S+M]$ must coincide. The probability  that a given position has not received a signal can be bounded by $C_1 \exp(-C_2 t)$, and the probability  that there is at least one such position in the interval $[-S; x]$ is bounded by $1 - (1 - C_1 \exp(-C_2 t))^{x+S}$. For  our values of parameters, this can be estimated as $1 - C_1 \exp(-C_2 N)  $. The same argument also works for the interval $[y; S+M]$. Therefore, we have $\lim_{N\to \infty}\Pb(\mathcal{N}_1 = \mathcal{N}_2)=1.$

Next, we note that
\begin{equation}\label{TWcor}
\lim_{N \to \infty} \Pb \left( \mathcal{N}_2 \ge k_N + N^{1/10} \right) = F_{\GUE} \left( c f(\alpha) \right).
\end{equation}
Indeed,  to compute   $\mathcal{N}_2$ we need not distinguish between first and second class particles. By the particle hole-duality, we then get
$$\Pb \left( \mathcal{N}_2 \ge k_N + N^{1/10} \right)=\Pb(x_{k_{N}+N^{1/10}}^{\mathrm{step}}>N-2k_{N}+N^{1/10}),
$$
so that \eqref{TWcor} follows from Corollary \ref{cor}. Since  $\lim_{N\to \infty}\Pb(\mathcal{N}_1 = \mathcal{N}_2)=1,$ we obtain
\begin{equation}
\label{eq:applyTW}
\lim_{N \to \infty}\Pb \left( \mathcal{N}_1 \ge k_N + N^{1/10} \right) = F_{\GUE} \left( c f(\alpha) \right). \end{equation}

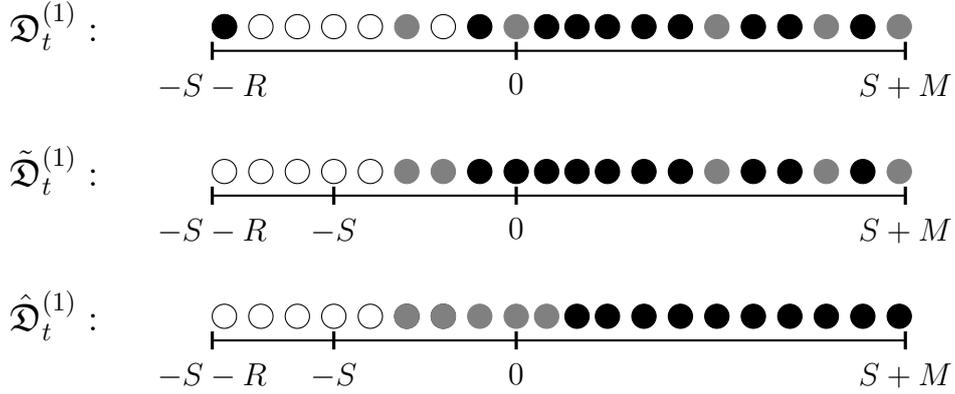
\begin{figure}\begin{center}
\begin{tikzpicture}[scale=1.6]

  \draw[thick] (-0.5,0) -- (5.2,0);

   \draw (-1.8,0.5) node[below] {\large{$\mathfrak{D}^{(1)}_t:$}};

   \draw (-0.5,-0.1) node[below] {$-S-R$};
      \draw (5.2,-0.1) node[below] {$S+M$};
  \draw (2,-0.1) node[below] {$0$};

   \foreach \x in {2,5.2,-0.5}
      \draw[very thick] (\x,0.075)--(\x,-0.075);

  \foreach \x in {-0.1,-0.4, 1.4,0.8, 0.2,0.5}
      \draw (\x,0.2) circle(0.1);
     \foreach \x in {2.75,3.05,2.5, 3.95, 2.25,3.35,4.25,4.85,1.7,-0.4}
      \filldraw (\x,0.2) circle(0.1);

          \foreach \x in {2,1.1,3.65,4.55,5.15}
      \filldraw[gray] (\x,0.2) circle(0.1);

      \begin{scope}[yshift=-1.2cm]

        \draw[thick] (-0.5,0) -- (5.2,0);

   \draw (-1.8,0.5) node[below] {\large{$\tilde{\mathfrak{D}}^{(1)}_t:$}};

 \draw (2,-0.1) node[below] {$0$};
 \draw (0.5,-0.1) node[below] {$-S$};
   \draw (-0.5,-0.1) node[below] {$-S-R$};
      \draw (5.2,-0.1) node[below] {$S+M$};

   \foreach \x in {5.2,-0.5,0.5,2}
      \draw[very thick] (\x,0.075)--(\x,-0.075);

         \foreach \x in {2.25,3.05,2,1.7,3.35,3.95,4.85,4.25,2.75,2.5 }
      \filldraw (\x,0.2) circle(0.1);

         \foreach \x in {1.1,1.4,5.15,4.55,3.65}
      \filldraw[gray] (\x,0.2) circle(0.1);
       \foreach \x in {0.8,0.5, -0.1,-0.4,0.2 }
      \draw (\x,0.2) circle(0.1);
 \end{scope}

    \begin{scope}[yshift=-2.4cm]
              \draw[thick] (-0.5,0) -- (5.2,0);

   \draw (-1.8,0.5) node[below] {\large{$\hat{\mathfrak{D}}^{(1)}_t:$}};
 \draw (2,-0.1) node[below] {$0$};
 \draw (0.5,-0.1) node[below] {$-S$};
   \draw (-0.5,-0.1) node[below] {$-S-R$};
      \draw (5.2,-0.1) node[below] {$S+M$};
    \foreach \x in {5.2,-0.5,0.5,2}
      \draw[very thick] (\x,0.075)--(\x,-0.075);
           \foreach \x in {-0.4,-0.1,0.2,0.5,0.8,1.1,1.4}
      \draw (\x,0.2) circle(0.1);
           \foreach \x in {1.4,1.1,1.7,2,2.25}
      \filldraw[gray] (\x,0.2) circle(0.1);
             \foreach \x in {2.5,2.75,3.05,3.35,3.65,3.95,4.25,4.55,4.85,5.15}
      \filldraw (\x,0.2) circle(0.1);
          \end{scope}
\end{tikzpicture}\end{center}
\caption{From top to bottom: The  particle configurations $ \mathfrak{D}^{(1)}_t,  \tilde{\mathfrak{D}}^{(1)}_t,  \hat{\mathfrak{D}}^{(1)}_t. $    If $ \mathcal{N}_1 \ge R + N^{1/10},$
then $\tilde{\mathfrak{D}}^{(1)}_t $ will have the segment $[-S-R;-S)$ filled only by holes with very high probability. Consequently, with very high probability $\hat{\mathfrak{D}}^{(1)}_t $  will have all its holes in $[-S-R;0],$ the second class  particles will be inside or close to the segment $[-N^{\frac{1}{10}}; N^{\frac{1}{10}}],$ and all first class particles will be    in $[ 0; S+R].$
}
\label{Graph3}
\end{figure}
Let us now analyze how the configuration $\mathfrak{\hat D}^{(1)}_t$ looks like conditioned on the event   $ \mathcal{N}_1 \ge k_N + N^{1/10}$. See Figure \ref{Graph3} for an illustration. According to the definition, we need to first bring into $Q$-equilibrium the segment $[-S-R;0]$ in the configuration $\mathfrak{ D}^{(1)}_t$. Since we have at least $k_N + N^{1/10} = R + N^{1/10}  $ holes in the segment $[-S-R;0]$, Proposition \ref{3.2} allows to conclude that the positions in the interval $[-S-R;-S)$ will \textit{all} be filled by holes with  probability at least $1 - C_1\exp \left( - C_2 N^{1/10} \right)$ after this step. Let us further restrict ourselves on the event that indeed $[-S-R;-S)$ is filled by holes. Then in the remaining segment $[-S;S+M]$ we have exactly
$$
S+M -  y  -R = S +(N-k_N+1) - (N-2 k_N +  N^{1/10}  ) - k_N = S -  N^{1/10}+1
$$
holes. Our second (and last) step is to bring into $Q$-equilibrium the segment $[-S;S+M]$. Again, due to Proposition \ref{3.2} the conditional probability that all these $S -  N^{1/10}+1 $ holes are inside  $[-S;0]$ can be bounded from below as  $1 - C_1 \exp \left( - C_2 N^{1/10} \right)$. Also, the segment $[-S;S+M]$ contains all $S+N-k_N-  N^{1/10} +1$ first class particles of the system, and the conditional probability that all of them will be inside the segment $[0;S+M]$  is also at least  $1 - C_1 \exp \left( - C_2 N^{1/10} \right)$ again by Proposition \ref{3.2}. It remains to note that the conditional probability that all the mentioned three events happen can also be estimated as $1 - C_1 \exp \left( - C_2 N^{1/10} \right)$, and that if these three events happen, than the event in the lefthand side of \eqref{prop:brr} also happens: The holes are all to the left of 0, and all first-class particles are to the right of 0. Combining this with \eqref{eq:applyTW} implies the claim.
\end{proof}

We have now collected all necessary ingredients to prove Theorem \ref{alexey}.
\begin{proof}[Proof of Theorem \ref{alexey}]
We define the configuration
$$
\widehat{\zeta}^0:= \mathbf{1}_{[ -k_N+1; 0]  } + \mathbf{1}_{\Z_{>(N-k_N)}}.
$$
Note that $\widehat{\zeta}^0$ is simply a shift by $(-k_N)$ of the configuration $\zeta^0$ from \eqref{zeta01}. We denote further the restriction of $\widehat{\zeta}^0$ to $[-S-R;S+M]$ by $\widehat{\zeta}^{0,S},$ i.e.
$$
\{0,1\}^{[-S-R;S+M]}\ni \widehat{\zeta}^{0,S}:= \mathbf{1}_{ [-k_N+1;  0] } + \mathbf{1}_{[N-k_{N}+1; S+ N-k_{N}+1]}.
$$
Let us prove that
\begin{equation}\label{60}
\lim_{N\to \infty}\Pb(\mathcal{L}(\widehat{\zeta}^{0,S}_{t})=\mathcal{L}(\widehat{\zeta}^{0}_{t}), \mathcal{R}(\widehat{\zeta}^{0,S}_{t})=\mathcal{R}(\widehat{\zeta}^{0}_{t}))=1.
\end{equation}
To see this,  note that for the event $\{\mathcal{L}(\widehat{\zeta}^{0,S}_{t})=\mathcal{L}(\widehat{\zeta}^{0}_{t}), \mathcal{R}(\widehat{\zeta}^{0,S}_{t})=\mathcal{R}(\widehat{\zeta}^{0}_{t})\}$ to happen, it suffices that one of the Poisson processes in $[-S-R;-S/2]$ and one of the Poisson processes in $[S/2;S+M]$ make no jump during $[0,t].$ Since $S\geq N^{N},$ this will happen with probability going to $1$ as $N\to \infty$  and \eqref{60} follows.

Recall further the partial order \eqref{order2} and note that we have
\begin{equation} \label{smaller}
\mathfrak{C}_{0}\preceq \widehat{\zeta}^{0,S}.
\end{equation}
Now, one can easily  compute that
under the basic coupling,  \eqref{smaller} implies that
\begin{equation}\label{shorter}
 \mathcal{L}(\mathfrak{C}_{t})\leq \mathcal{L}(\widehat{\zeta}^{0,S}_{t}) \mathrm{\,\,and\,\, } \mathcal{R}(\mathfrak{C}_{t})\geq \mathcal{R}(\widehat{\zeta}^{0,S}_{t})
\end{equation}
 (see e.g. \cite[Lemma  3.2]{N20AAP} and  \cite[Lemma  3.2]{N20CMP} for a proof for the partial order \eqref{order} which carries over to our case).
 We can now conclude
  \begin{align*}
 \liminf_{N\to\infty} \Pb(B_{N}(c))&= \liminf_{N\to\infty}\Pb(\mathcal{L}(\widehat{\zeta}^{0,S}_{t})> x, \mathcal{R}(\widehat{\zeta}^{0,S}_{t})\leq y )
 \\&\geq \liminf_{N\to\infty} \Pb \left( \mathcal{L} \left( \mathfrak{C}_t \right) >  x, \mathcal{R} \left( \mathfrak{C}_t \right) \le y \right)
 \\& \geq F_{\GUE} \left( c f(\alpha) \right),
\end{align*}
where the first inequality follows from \eqref{shorter}, the second follows from Propositions \ref{prop:DistId3} and \ref{prop:baseMult}.

Finally, using Corollary \ref{cor} we obtain
 \begin{align*}
 \limsup_{N\to\infty} \Pb(B_{N}(c))&\leq  \lim_{N\to\infty}\Pb(\mathcal{L}(\widehat{\zeta}^{0}_{t})\geq x)\\&\leq \lim_{N \to \infty}\Pb\left(x_{k_{N}}^{\mathrm{step}}(g(k_{N},c))\geq N-2k_{N} -N^{1/10}\right)\\&=F_{\GUE} \left( c f(\alpha) \right),
 \end{align*}
 so that Theorem \ref{alexey} follows.

\end{proof}

\begin{remark} \label{rem:further-questions}
It is plausible that an extension of Theorem \ref{main} to the case of slowly varying parameters $p-q \to 0$, $N^{1/3} (p-q) \to \infty$ can be obtained by an upgrade of our method. This will require certain additional estimates, in particular, regarding the asymptotics of Mallows measure. On the other hand, if $(p-q)$ becomes to decay to 0 faster, new effects should appear, first on the level of the cutoff window, and then on the level of the cutoff itself (cf. \cite{LP16}). 

Another interesting direction would be to understand whether our method can be adjusted to the study of mixing times in ASEP with open boundaries, cf. \cite{GNS20}.     
\end{remark}

\bibliography{Biblio}{}
\bibliographystyle{plain}

\end{document}